\documentclass[11pt]{article}
\usepackage[margin=1.1in]{geometry}
\usepackage{amsmath,amssymb,amsthm}

\usepackage{tikz}
\usetikzlibrary{calc, arrows.meta}

\usepackage[colorlinks=true,linkcolor=blue,citecolor=blue]{hyperref}

\newtheorem{theorem}{Theorem}[section]

\newtheorem{observation}[theorem]{Observation}
\newtheorem{definition}[theorem]{Definition}
\newtheorem{corollary}[theorem]{Corollary}
\newtheorem{conjecture}[theorem]{Conjecture}
\theoremstyle{remark}

\theoremstyle{plain}

\def\Z{\mathbb{Z}}
\newcommand{\eps}{\varepsilon}

\newcommand{\calC}{\mathcal C}
\newcommand{\calI}{\mathcal I}
\newcommand{\calN}{\mathcal N}
\newcommand{\CC}{\mathbb C}
\newcommand{\ip}[2]{\langle #1,\, #2\rangle}

\newcommand{\symdiff}{\mathbin{\triangle}}

\newcommand{\doi}[1]{\href{https://doi.org/#1}{doi:\,#1}}
\newcommand{\pdflink}[1]{\href{#1}{#1}}

\hypersetup{pdfauthor={Radek Hušek, Robert Šámal}}

\title{Exponentially Many Circuit Double Covers\footnote{Supported by grant 25-16627S of the Czech Science Foundation.}}
\author{%
Radek Hušek\thanks{%
\texttt{radek.husek@fit.cvut.cz}, Faculty of Information Technology, Czech Technical University in Prague,
Czech Republic
}%
\and Robert Šámal\thanks{
\texttt{samal@iuuk.mff.cuni.cz},
Computer Science Institute of Charles University,
Faculty of Mathematics and Physics, Charles University,
Prague, Czech Republic
}%
}
\date{} 

\begin{document}
\maketitle

\begin{abstract}
  The cycle double cover conjecture of Szekeres and Seymour, the proof of which was recently announced
  by OpenAI, states that every bridgeless graph has a collection of cycles
  covering every edge exactly twice. 
  We study the counting version of this statement for cubic graphs, where we count circuit double covers ---
  collections of circuits (connected 2-regular subgraphs) covering every edge
  twice. We show that every 2-edge-connected 3-edge-colorable cubic graph on $n$
  vertices has at least $2^{n/2-1}$ circuit double covers, matching our
  previously conjectured general lower bound. 
  For every 3-edge-connected cubic graph with girth at least 16 we show a weaker
  exponential lower bound on circuit double covers.
  For both of these results we use the same system of linear equations used by OpenAI in their proof, 
  however, we provide additional combinatorial interpretation. 
  We characterize planarity of a cubic graph by solvability of this system of equations for arbitrary nowhere-zero $\Z_2^k$-flow. 
  We give a condition on the flow that is equivalent to existence of a 5-cycle double cover.
\end{abstract}

\section{Introduction}\label{sec:intro}

We consider undirected connected graphs. The graph is usually denoted $G$ with
$V$ being the set of its vertices and $E$ the set of its edges.
We consider simple cubic (i.e., 3-regular) graphs unless noted otherwise.
Loops and multiedges are not interesting as they
force existence of a 1-cut or 2-cut unless the connected component
is just three parallel edges.
Throughout, connectivity and cuts refer to edge-connectivity and edge-cuts unless explicitly stated otherwise.
However, for cubic graphs, the edge and vertex connectivity coincide. 

We denote by $E(v)$ the set of edges incident with vertex $v$.
We write $[A \to B]$ for the set of all functions from~$A$ to~$B$.
We will use flows throughout the paper but always only $\Z_2^k$-flows,
so we do not need directed edges. Note that we always treat $\Z_2^k$
as a $k$-dimensional space over $\Z_2$.

\begin{definition}[Flow]
  A mapping $f: E \to \Z_2^k$ is a {\em $\Z_2^k$-flow} if $\sum_{e \in E(v)} f(e) = 0$ for
  all $v \in V$. We say that a flow $f$ is {\em nowhere-zero} (or {\em NZ} for short)
  if $f(e) \neq 0$ for all $e \in E$.
\end{definition}

Two related notions of ``double cover'' appear in the literature under names that are
often used loosely.

\begin{definition}[Cycle double cover]\label{def:cydc}
A subgraph of a graph $G$ is \emph{even} if every vertex has even degree in it (the
empty subgraph is allowed). A \emph{(labeled) $k$-cycle double cover} ($k$-CyDC) of $G$
is a tuple\footnote{I.e., the order of the cycles matters.} $(C_1, \dots, C_k)$
of even subgraphs of $G$ such that every edge of $G$
belongs to exactly two members. We say just CyDC if we do not care about $k$.
\end{definition}

Edge sets of even subgraphs are exactly cycles of the matroid theory. 
For this reason (and following large part of relevant literature) we will 
call even subgraphs cycles in the rest of the paper. 
If the number of cycles in a CyDC is a power of two, we will identify their
labels with elements of $\Z_2^p$ for a suitable $p$.
We treat cycles (and also circuit defined below) as sets of edges.
For cycle $C$ we denote $V(C) = \{ v \in V : E(v) \cap C \neq \emptyset \}$
the set of its vertices and $\gamma(C): E \to \Z_2$ the flow defined
$\gamma(C)(e) = 1$ if $e \in C$ and 0 otherwise.
The \emph{cycle space} of a graph is the subspace of $\Z_2^E$
consisting of the characteristic vectors of all cycles.

Note that we fix the order of cycles in CyDC so two of them differing in the order
of cycles are considered to be different. This might be slightly non-standard but
it fits better the presented approach and the difference to unordered version
is at most $k!$ for counting $k$-CyDCs. Because we will consider at most 8-CyDCs,
this is a constant.

\begin{definition}[Circuit double cover]\label{def:cidc}
A \emph{circuit} of $G$ is a connected $2$-regular subgraph. A
\emph{circuit double cover} (CiDC) of $G$ is a finite family of
circuits of $G$ such that every edge of $G$ belongs to exactly two members of the
family.
\end{definition}

In a general graph, a CiDC may need to be a multiset, but in cubic graphs we 
will never need to use the same circuit twice. 
Since every circuit is a cycle, every CiDC is a CyDC.
Conversely, every cycle can be decomposed into a set of circuits.
Hence they are the same for existential questions but differ for counting, obviously.

\begin{conjecture}[Szekeres~\cite{szekeres}, Seymour~\cite{seymour}]\label{conj:CDC}
Every bridgeless cubic graph has a CyDC (equivalently, a CiDC). 
\end{conjecture}

In our earlier paper we conjectured (based on numerical experiments and proofs in special cases) that there is 
always an exponential number of circuit double covers. We ask for those for two reasons. 
First, their number has perhaps more natural interpretation, as they correspond to surface embeddings 
(every circuit is one face boundary). Second, as we argued in~\cite{hs}, lower bounds on 
CyDC are easier to obtain, and the following (stronger) conjecture appears to be true. 

\begin{conjecture}[Hušek, Šámal~\cite{hs}]\label{conj:manyCiDCs}
Every 2-connected cubic graph on $n$ vertices has at least $2^{n/2 - 1}$ CiDCs.
\end{conjecture}

Conjecture~\ref{conj:CDC} was recently proven by OpenAI~\cite{openai}.\footnote{As of
July 2026 the proof has not yet appeared in a refereed publication, but it is
accompanied by a Lean formalization~\cite{openai-lean}.} 
An exposition by Oum~\cite{Oum} adds more explanations to the proof. 
We present another point of view on the proof announced by OpenAI and extend it in several ways, 
mostly involving counting. 
First, we prove Corollary~\ref{cor:8CyDC}, that gives $\Omega(2^{n/2})$ 8-CyDCs by 
only noting that 8-CyDCs bijectively correspond to solutions of the system of linear equations~\eqref{eq:cond}. 
We also show the following linear algebraic planarity characterization.
\begin{theorem}\label{thm:planchar}
  Let $G$ be a 2-edge-connected cubic graph. Then $G$ is planar if and only if system~\eqref{eq:cond} 
  is solvable for every nowhere-zero $\Z_2^5$-flow on $G$. If $G$ is 3-edge-colorable, 
  $\Z_2^5$ may be replaced by $\Z_2^4$.
\end{theorem}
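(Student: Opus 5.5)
The plan is to prove the two directions separately, using the combinatorial reading of system~\eqref{eq:cond} that underlies Corollary~\ref{cor:8CyDC}. I read a solution of~\eqref{eq:cond} for a nowhere-zero flow $f$ as a family of (circuit) double covers together with labels $p(C)\in\Z_2^k$ on its members such that $f(e)=p(C)+p(C')$ for the two members $C,C'$ containing~$e$. In other words, $f$ is the ``coboundary'' of a labeling of a double cover. If the exact form of~\eqref{eq:cond} differs, the first step is to establish this dictionary, in the same way as the bijection with 8-CyDCs.

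\emph{Planar $\Rightarrow$ solvable.} Fix a planar embedding of $G$; since $G$ is 2-edge-connected, its face boundaries are circuits forming a CiDC. A $\Z_2^k$-flow on a plane graph is exactly a $\Z_2^k$-tension on the dual $G^*$. Since the dual is connected, this tension integrates to a potential $p\colon F(G)\to\Z_2^k$ with $f(e)=p(F_1)+p(F_2)$, where $F_1,F_2$ are the faces incident with~$e$. This potential is the required solution, and it works for every flow and every $k$, including $k=4$ and $k=5$. Nowhere-zeroness guarantees that the two labels around each edge differ, in case~\eqref{eq:cond} requires this.

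\emph{Non-planar $\Rightarrow$ some NZ flow fails.} This is the main obstacle.
\begin{itemize}
\item I would use MacLane's criterion: $G$ is planar iff its cycle space has a basis (or generating family) in which every edge lies in at most two members.
\item Given a solution for $f$, the members of the double cover give cycles $C$ with labels $p(C)$, and $f=\sum_C p(C)\,\gamma(C)$ coordinatewise.
\item I would choose $f$ ``generic'': take a NZ $\Z_2^3$-flow (Jaeger) or, for 3-edge-colorable $G$, a NZ $\Z_2^2$-flow. Then add two further coordinates (resp.\ two for $\Z_2^4$) given by cycles $D_1,D_2$ chosen so that they witness non-planarity.
\item To choose $D_1,D_2$, take a subdivision of $K_{3,3}$ in $G$; a cubic graph cannot contain a subdivided $K_5$ without also containing a subdivided $K_{3,3}$, so $K_{3,3}$ suffices. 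Then pick $D_1,D_2$ as two cycles of this subdivision that cannot both be expressed as sums of members of any double cover, using the fact that $K_{3,3}$ has no 2-basis.
\item Projecting the equation $f=\sum p(C)\gamma(C)$ onto the extra coordinates would express $D_1$ and $D_2$ as sums of double-cover members with $\Z_2$ coefficients. I would then derive a contradiction via a parity or Euler-characteristic count, in the style of the standard proof that $K_{3,3}$ is not planar.
\end{itemize}

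I expect the hardest part to be the exact choice of $D_1,D_2$ and the contradiction argument. A double cover of $G$ corresponds to an embedding on some surface, possibly non-orientable, so the argument must show that the two extra coordinates force a configuration realizable only on the sphere. My first attempt would compute, for the surface $S$ given by the double cover, the intersection form on $H_1(S;\Z_2)$. I would then show that if $D_1$ and $D_2$ are both boundaries of face sums, their mod-2 intersection vanishes. Finally, I would choose $D_1,D_2$ in the $K_{3,3}$ subdivision crossing an odd number of times in any embedding, which contradicts this vanishing. One must also ensure the combined flow stays nowhere-zero; this is automatic, because the base coordinates are already nowhere-zero.
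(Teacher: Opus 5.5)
\textbf{Verdict: there is a genuine gap in the non-planar direction.} Your planar direction is fine and is essentially the paper's argument. The paper writes $f$ in a MacLane 2-basis, i.e.\ the face basis, instead of integrating a dual tension, and then uses Observation~\ref{obs:cydc-to-sol} to turn the face labelling into a solution. Your flow for the converse, a base NZ flow extended by two coordinates $\gamma(D_1),\gamma(D_2)$ from a $K_{3,3}$-subdivision, is also exactly the paper's construction.

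\emph{The failing step.} You want $D_1,D_2$ whose mod-2 intersection number is odd in \emph{every} embedding. No such pair exists. For two cycles of an embedded cubic graph, the parity of $D_1\cdot D_2$ is a sum of local contributions, one for each maximal common path. Each contribution depends on whether a cycle leaves that path on the same side at both ends. Changing the local rotation at one end flips that contribution, so the parity depends on the embedding, not only on the pair.

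A concrete counterexample, for $G=K_{3,3}$ itself, is its Möbius-ladder embedding in the projective plane. Its faces are three $4$-cycles and one $6$-cycle. The automorphism group of $K_{3,3}$ is transitive on $4$-cycles and on $6$-cycles, so every cycle $D_1$ is a face of some strong embedding. There $D_1$ is null-homologous and $D_1\cdot D_2=0$ for every $D_2$. MacLane's criterion does not rescue this: the face span of any CiDC of a non-planar graph is a proper subspace, but that does not single out a pair that can never both lie in it.

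\emph{What is missing.} You need an embedding-independent obstruction, and it requires auxiliary cycles. The paper takes $D_1=A=axcy$ and $D_2=B=aycz$, together with the annihilator flow $h=(0,\gamma(A'),\gamma(B'))$, where $A'=aytz$ and $B'=axty$. One checks that $\ip{f}{h}\equiv 0$, $T_h=\{a,t\}$, $\lambda_t=0$ and $\lambda_a=1$. By Observations~\ref{obs:annih} and~\ref{obs:annih-3} the system is then inconsistent.

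In your topological language this says $A\cdot A'+B\cdot B'\equiv 1$ in every embedding:
\begin{itemize}
\item $A\cap A'$ and $B\cap B'$ are the same path from $a$ to $y$.
\item At $y$, both $A$ and $B$ continue towards $c$.
\item At $a$, $A$ leaves towards $x$ and $B$ towards $z$, which are on opposite sides of the path.
\item Hence exactly one of the pairs $(A,A')$, $(B,B')$ crosses.
\end{itemize}
This rules out $A$ and $B$ both being sums of faces. With this replacement your intersection-form route would work. It still requires intersection theory on possibly non-orientable surfaces, together with the dictionary between solutions and strong embeddings, which the paper's purely algebraic parity computation avoids.
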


For 3-edge-colorable cubic graphs (where existence of a CiDC is well-known), we prove
a bound on their number matching Conjecture~\ref{conj:manyCiDCs}. 

\begin{theorem}\label{thm:3-col}
Let $G$ be a 2-connected 3-edge-colorable cubic graph on $n$ vertices.
Then $G$ has at least $2^{n/2 - 1}$ CiDCs. 
\end{theorem}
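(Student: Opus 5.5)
The plan is to exploit the 3-edge-coloring to make the linear system~\eqref{eq:cond} explicit, and then count its solutions. Fix a 3-edge-coloring with perfect matchings $M_1,M_2,M_3$. Interpret it as a nowhere-zero $\Z_2^2$-flow $f$, sending color $i$ to the $i$-th nonzero vector. Then view $f$ as a $\Z_2^3$-flow through a fixed embedding $\Z_2^2\hookrightarrow\Z_2^3$.

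By the correspondence behind Corollary~\ref{cor:8CyDC}, 8-CyDCs correspond bijectively to solutions of~\eqref{eq:cond} for this flow. The first step is to compute the dimension of the solution space. The system should be homogeneous in suitable coordinates, with one unknown in $\Z_2$ per vertex (equivalently, per edge of one matching) and one constraint per circuit of a 2-factor. Using that the 2-factors $M_i\cup M_j$ consist of even circuits, I expect the solution set to be an affine space of dimension at least $n/2 + c$ for a small explicit constant $c$. The bound should come from $|V|=n$ unknowns and at most $n/2$ independent constraints, since each constraint is supported on a matching-sized set and 2-connectedness forces one global dependency.

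The second step converts solutions into circuit double covers. In a cubic graph every cycle (even subgraph) is a disjoint union of circuits in a unique way. Hence each 8-CyDC $(C_a)_{a\in\Z_2^3}$ yields a canonical CiDC by taking all circuits of all the $C_a$; in cubic graphs no circuit is repeated, so this is a set. To see the mechanism in the simplest case, take $F=M_1\cup M_2$. For every choice $X$ of one of the two perfect matchings on each circuit of $F$, the triple $(F,\,M_3\cup X,\,M_3\cup(F\setminus X))$ is a CyDC. The solutions of~\eqref{eq:cond} generalize this local choice from ``one bit per circuit of $F$'' to ``one bit per vertex pair''. This gain is what should lift the count from $2^{\#\text{circuits}}$ to $2^{n/2}$.

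The third step, which I expect to be the main obstacle, is bounding how many solutions map to the same CiDC. First I would quotient out the trivial symmetries: relabelling by affine maps of $\Z_2^3$ fixing the flow, and solutions in which some $C_a$ is empty or duplicated. I would aim to show that the remaining fibres have size at most $2^{c+1}$. The idea is to reconstruct the labels from the circuits: the label of the cycle containing a given circuit is determined, up to a global shift, by the flow values on the circuit's edges together with the two labels seen at each edge. Propagating these labels along the connected graph, using 2-connectivity, pins everything down up to that global shift. If the reconstruction argument loses too much, the fallback is induction on $n$. In that case I would reduce along a 2-edge-cut or a 4-circuit in a way that preserves 3-edge-colorability, and check that each reduction step multiplies the count by at least $2$ per pair of removed vertices. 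The base case is $K_4$ or the theta graph, where $2^{n/2-1}$ CiDCs can be checked directly.
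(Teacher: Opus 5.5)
Your architecture matches the paper's: count solutions of~\eqref{eq:cond} for the flow induced by the 3-edge-coloring, then divide by the fibre size of $\sigma\circ\mu_k$. The gap is that the dimension of the solution space, which carries all the content, is only guessed, and the justification you give is not valid. That each constraint is ``supported on a matching-sized set'' says nothing about rank. The reduced system with one unknown per vertex and one constraint per circuit of a 2-factor is never derived from~\eqref{eq:cond}.

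You tie the dimension bound $n/2+c$ and the fibre bound $2^{c+1}$ to the same unspecified $c$, but nothing links them, and the theorem has an exact constant. In your $\Z_2^3$ setting the fibres have size exactly $8$, so you need dimension exactly $n/2+2$. The missing tool is Corollary~\ref{cor:num-sols}: the dimension equals (number of variables) minus (number of equations) plus (dimension of the annihilator flows). With $k=3$ there are $\tfrac92 n$ variables and $\tfrac92 n$ equations, so everything comes from the annihilators. These are $h=(\beta\,\alpha(f),g)$ with $\beta\in\Z_2$ and $g$ an arbitrary $\Z_2$-flow, giving dimension $n/2+2$; your heuristic does not account for the $n/2+1$ annihilators coming from the third coordinate.

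The paper works directly with $k=2$. Choosing $\rho_v$ with $c(\rho_v(e))=c(e)+1$ gives $f(\rho_u(uv))=f(\rho_v(uv))$, so the system is homogeneous and trivially solvable. There are $\tfrac72 n$ variables and $3n$ equations, and Observation~\ref{obs:annih-k2} gives the left kernel $\{0,\alpha(f)\}$, so the dimension is $n/2+1$. Your embedding into $\Z_2^3$ gains nothing: the third coordinate of the equations forces the third coordinate of every $t_v$ to be constant, so it only doubles both the solution count and the fibre size.

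The step you call the main obstacle is short. By Observation~\ref{obs:cidc}, two solutions for the same flow give the same CiDC iff they have the same $\eps$. Adding the two copies of~\eqref{eq:cond} shows this holds iff the $t$'s differ by a global constant (Corollary~\ref{col:cydc-to-cidc}). Hence every fibre has size exactly $2^k$, and $2^{n/2+1}/4=2^{n/2+2}/8=2^{n/2-1}$. Your label-propagation idea is essentially this argument.

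Your plan to quotient out solutions with some empty $C_a$ is confused, however. All labels share the constant third coordinate of $t$, so every solution for the embedded flow has four empty cycles, and discarding such solutions discards everything. The inductive fallback is also not viable as stated. A 3-edge-colorable cubic graph such as the dodecahedron has no 2-edge-cut and no 4-circuit, so there is nothing to reduce.
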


In \cite{hs} we observed the result is tight for so-called Klee graphs. 
In \cite{hs} we showed that the minimal counterexample to Conjecture~\ref{conj:manyCiDCs}
does not have nontrivial cut smaller than 4 and does not have 4-cycles.
Here, we prove that graphs with high enough girth have exponentially many CiDCs.

\begin{theorem}\label{thm:snarks}
Let $G$ be a 3-connected cubic graph on $n$ vertices with girth $g\ge 16$.
Then the number of CiDCs of $G$ is $2^{\Omega(n)}$.
\end{theorem}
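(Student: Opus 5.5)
The plan is to pass from many CyDCs to many CiDCs. We first produce exponentially many 8-CyDCs from the linear system~\eqref{eq:cond}, then use the girth to show that these do not collapse onto few CiDCs.

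\textbf{Step 1: many cycle double covers.} A 3-connected cubic graph has a nowhere-zero $\Z_2^3$-flow $f$; for instance, one can use Jaeger's 8-flow theorem. By the bijection behind Corollary~\ref{cor:8CyDC}, the solutions of~\eqref{eq:cond} for this $f$ form an affine space of 8-CyDCs. I expect this space to have dimension about $n/2$, i.e.\ roughly the dimension of the cycle space minus a constant. Each 8-CyDC $(C_1,\dots,C_8)$ then yields at least one CiDC, obtained by decomposing every $C_i$ into its circuits.

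\textbf{Step 2: bounding the overcount.} We must bound how many 8-CyDCs map to the same CiDC $\calC$. A preimage of $\calC$ is determined by assigning each circuit of $\calC$ a label in $\Z_2^3$, subject to the condition that circuits with the same label are edge-disjoint. This is where girth enters. In a cubic graph with girth $g$, every circuit has length at least $g$, and every edge lies in exactly two circuits of $\calC$. Hence
\[
|\calC|\le 2|E|/g = 3n/g .
\]
Consequently, the number of preimages is at most $8^{|\calC|}\le 2^{9n/g}$. With $g\ge 16$ this gives $9n/16<n/2$, so the number of CiDCs is at least
\[
2^{(n/2-c)-9n/16}.
\]
This is negative in the exponent, so the crude count fails.

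\textbf{Step 3: the refined count (main obstacle).} The fix I would try first is to restrict attention to the part of the solution space that actually varies. Inside a fixed CiDC, the label assignments realising the same flow $f$ are constrained by~\eqref{eq:cond} rather than free. The edge $e$ is covered by the two circuits carrying labels $a,b$ with $a+b=f(e)$, or a similar relation, depending on the precise form of~\eqref{eq:cond}. So once one label on a circuit is fixed, the label of each neighbouring circuit is forced up to a constant factor, and propagating this determines a connected component of the circuit-adjacency structure. I would therefore bound the preimages by $2^{O(\#\text{components})}$ times a per-circuit factor only for circuits that are ``free''.

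An alternative in the same spirit is to pick, among the solutions, a subspace of dimension $\Omega(n)$ generated by local switches. Each switch changes a solution only along a short circuit of length at most some $L$ near a vertex. By girth $\ge 16$, the switches supported in disjoint balls of radius about $8$ produce distinct circuit decompositions. A greedy packing of such balls in a cubic graph gives $\Omega(n)$ independent switches, hence $2^{\Omega(n)}$ distinct CiDCs.

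The delicate point is verifying that each local switch genuinely changes the set of circuits rather than merely relabelling it. This is where I expect the threshold $16$ to come from: twice the radius needed so that the affected faces are distinct circuits seen locally as trees.
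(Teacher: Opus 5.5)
\textbf{Gap: Step 1 is false, and the fibre bound that makes the count work is missing.}

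Your Step~2 framework is the right one: count 8-CyDCs, divide by the number of 8-CyDCs lying over a single CiDC, and use $c\le 3n/g$. The problem is where the exponentially many 8-CyDCs come from. For a fixed NZ $\Z_2^3$-flow $f$, Corollary~\ref{cor:num-sols} gives a solution space of dimension $d+n(3-k)/2=d$, where $d$ is the dimension of the annihilator space of $f$. Nothing makes $d$ of order $n/2$. The only general lower bound is $d\ge 3$, and that is fully accounted for by the translations $t\mapsto t+x$. The paper itself regards many solutions for a single flow as unlikely.

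Your Step~3 observation shows why this matters. For fixed $f$, the circuit labels are forced up to a global shift, which is exactly Corollary~\ref{col:cydc-to-cidc}. So if one flow had $2^{n/2-O(1)}$ solutions, you would get $2^{n/2-O(1)}$ distinct CiDCs with no girth hypothesis at all.

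The exponentially many 8-CyDCs must instead come from exponentially many flows. Theorem~\ref{thm:many-8NZF} gives $2^{n/2-1}$ NZ $\Z_2^3$-flows; this is where 3-connectivity is used. Each flow is solvable by Observation~\ref{obs:align} and contributes 8 translates, so $|\CC_8|\ge 2^{n/2+2}$. Once the flow varies, fixed-$f$ rigidity tells you nothing about the fibres of $\sigma$. Your local-switch alternative has the same problem: it assumes an abundance of solutions for one flow that you never establish.

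What you need instead is a sharper bound on the fibres. A CiDC with $c$ circuits has only $O(6^c)$ proper 8-colorings of its circuits (circuits sharing an edge get distinct labels), not $8^c$.
\begin{itemize}
\item Order $V$ as $v_1,\dots,v_n$ so that each $v_i$ with $i\ge 2$ has a spanning-tree neighbour $v_j$ with $j<i$.
\item The three circuits at $v_1$ admit $8\cdot 7\cdot 6$ colorings.
\item At each later $v_i$, the two circuits through $v_iv_j$ are already colored. So at most one new circuit appears, and it has at most $6$ admissible colors.
\end{itemize}
Hence $|\CC_*|\ge 2^{n/2+2}/O(6^{3n/g})$. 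The exponent is $n/2-(3\log_2 6)\,n/g$, which is positive precisely when $g>6\log_2 6\approx 15.51$. That inequality, not a ball-packing radius, is where the threshold $16$ comes from. Your crude $8^c$ count would instead have required $g>18$, which is why it failed at $g=16$.
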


While the proof of Conjecture~\ref{conj:CDC} is certainly big news, there are 
still many versions that remain open. In particular, 
it's been conjectured that five cycles suffice for each graph.

\begin{conjecture}[Celmins \cite{Celmin}, Preissmann \cite{Preismann}]\label{conj:5CDC}
  Every bridgeless cubic graph has a 5-CyDC.
\end{conjecture}

We don't know if the present methods can be extended to prove this, but in Section~\ref{sec:5CDC} 
we provide some ideas in this direction. 

\paragraph{A technical overview of the paper}
In Section~\ref{sec:constr} we understand a solution of~\eqref{eq:cond} as a properly face-labeled signed embedding; equivalently, it is a labeled CyDC.
For $k=2$, a 3-edge-coloring makes the system homogeneous and leaves a kernel of dimension at least $n/2+1$; this is done
in Section~\ref{sec:kfour}. 
For $k=3$, every NZ flow admits a solution. There are exponentially many flows, hence exponentially many labeled $8$-CyDCs.
Several labeled CyDCs can induce the same CiDC. A CiDC with $c$ circuits admits only $O(6^c)$ proper 8-colorings; girth more than $g$ gives $c\le 3n/g$.
A $5$-CyDC is obtained by restricting the allowable face labels to a five-element set, and this becomes a parity condition on a chosen $\Z_2^3$-flow.

\paragraph{Relation to~\cite{openai}}
This paper builds on the proof of CyDC conjecture by~\cite{openai}. The construction
itself, Observations~\ref{obs:sol-to-cydc}, \ref{obs:annih}, \ref{obs:annih-vert},
\ref{obs:annih-3} and~\ref{obs:align} are reformulations of their proof from a more
combinatorial point of view. The counting results, planarity characterization
and combinatorial insights are new.

\section{The construction}
\label{sec:constr}

The technique of modifying nowhere-zero $\Z_2^k$-flows to get a CyDC started with 
Tutte's~\cite{tutte49} correspondence between nowhere-zero $\Z_2^2$-flows and 4-colorable CyDCs, 
which are a special case of~\eqref{eq:cond}. 
The key novelty of OpenAI's approach~\cite{openai} is adding right-hand side to these equations. 
The following construction is a reformulation of the proof by OpenAI but we give many more details about it
and we derive the equations using the language of graph embeddings. 

We fix a nowhere-zero flow~$f : E \to \Z_2^k$ for some $k \geq 2$. 
We fix for every vertex~$v$ a 3-cycle $\rho_{v} : E(v) \to E(v)$, in an arbitrary way. (Usually, independently of~$f$ and of~$k$). 
For a vertex-edge incidence $(v,e)$, call the side opposite $\rho_v(e)$ the $0$-side of $e$ at $v$, and the side opposite $\rho_v^{-1}(e)$ its $1$-side.
We define their labels as in Figure~\ref{fig:rho}, using variables $t_v \in \Z_2^k$: 
\begin{equation}\label{eq:labels}
  a^0_{v,e}:=t_v+f(\rho_v(e)),\qquad a^1_{v,e}:=t_v+f(\rho_v^{-1}(e)) 
\end{equation}
For $e=uv$, we identify side $i$ of $e$ at $u$ with side $i+\eps_e$ at $v$, with a binary variable $\eps_e$. 
This defines an embedding scheme of~$G$, as defined, e.g., in Mohar and Thomassen~\cite{MTbook}, 
we let the two facial walks through edge~$e$ cross in the middle of~$e$, whenever $\eps_e = 0$
(see Figure~\ref{fig:eps}, the typical use of sign $\pm 1$ is less convenient here). 
The next observation shows that the labeling of edge-sides (also called darts, or half-edges in the literature) can be 
consistently extended to a labeling of face boundaries -- whenever a system of linear equations is satisfied. 

\begin{observation}\label{obs:keyemb}
  With the notation above and definition~\eqref{eq:labels} we have
\begin{enumerate}
  \item (Vertex consistency) The $1$-side of $e$ and the $0$-side of $\rho_v(e)$ have the same label. 
  \item (Edge consistency) $a^j_{u,e}=a^{j+\eps_e}_{v,e}$ is equivalent to 
    \begin{align}
      \left.
      \begin{aligned}
        t_u + t_v + \eps_{uv} f(uv) &= f(\rho_{u}(uv)) + f(\rho_{v}(uv))  &&uv \in E \\
        t_v &\in \Z_2^k                                                   &&v \in V \\
        \eps_e &\in \Z_2                                                  &&e \in E
      \end{aligned}
      \qquad \right\} \label{eq:cond}
    \end{align}
  \item (Edge difference) The two labels on~$e$ differ by~$f(e)$. 
\end{enumerate}
\end{observation}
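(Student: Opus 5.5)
All three parts follow by unwinding definition~\eqref{eq:labels}, using the flow condition at each vertex. The only real care needed is in the second part, in handling the two values of $\eps_e$.

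\emph{Vertex consistency.} Let $e' = \rho_v(e)$. The $1$-side of $e$ has label $a^1_{v,e} = t_v + f(\rho_v^{-1}(e))$. The $0$-side of $e'$ has label $a^0_{v,e'} = t_v + f(\rho_v(e')) = t_v + f(\rho_v^2(e))$. Since $\rho_v$ is a 3-cycle on $E(v)$, we have $\rho_v^2 = \rho_v^{-1}$, so the two labels coincide. No use of the flow property is needed here.

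\emph{Edge difference.} This part gives the identity used in the next one. We have $a^0_{v,e} + a^1_{v,e} = f(\rho_v(e)) + f(\rho_v^{-1}(e))$. The three edges $e, \rho_v(e), \rho_v^{-1}(e)$ are exactly $E(v)$, so the flow condition $\sum_{e''\in E(v)} f(e'') = 0$ in $\Z_2^k$ gives
\[
  a^0_{v,e} + a^1_{v,e} = f(e).
\]
Equivalently, $a^1_{v,e} = a^0_{v,e} + f(e)$, and in general $a^{i}_{v,e} = a^0_{v,e} + i f(e)$ for $i\in\Z_2$. So at either endpoint the two labels of $e$ differ by $f(e)$. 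Once edge consistency holds, the labels at $u$ and at $v$ are the same pair, so this statement does not depend on which endpoint we use.

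\emph{Edge consistency.} Let $e=uv$. Using the identity above at both ends,
\[
  a^j_{u,e} + a^{j+\eps_e}_{v,e} = \bigl(a^0_{u,e} + j f(e)\bigr) + \bigl(a^0_{v,e} + (j+\eps_e) f(e)\bigr) = t_u + t_v + f(\rho_u(e)) + f(\rho_v(e)) + \eps_e f(e).
\]
The $j$-terms cancel, so the value is the same for $j=0$ and $j=1$. Hence requiring $a^j_{u,e}=a^{j+\eps_e}_{v,e}$ for $j=0,1$ is equivalent to a single equation, namely the equation for $uv$ in~\eqref{eq:cond}, where signs are irrelevant over $\Z_2$. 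The constraints $t_v\in\Z_2^k$ and $\eps_e\in\Z_2$ are just the domains of the variables.

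The main subtle point is the $j$-independence in this last step. It relies on the edge-difference identity, and hence on the flow condition and on the $\Z_2^k$ arithmetic. Everything else is direct substitution.
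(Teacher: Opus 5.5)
Your proposal is correct and follows essentially the paper's proof: part~1 via $\rho_v^2=\rho_v^{-1}$, part~3 from the flow condition at $v$, and part~2 by substituting~\eqref{eq:labels} and using that same flow identity. Proving part~3 first and then observing that the $j$-terms cancel simply makes explicit what the paper's one-line argument for part~2 leaves implicit.
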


\begin{proof}
  1. We need to show $a^1_{v,e} = a^0_{v,\rho_v(e)}$. By~\eqref{eq:labels}, both are equal to $t_v + f(\rho^2_v(e))$. 

  2. Flow condition at $v$ gives $f(e) + f(\rho_v(e)) + f(\rho^{-1}_v(e)) = 0$, together with~\eqref{eq:labels}, gives 
  us~\eqref{eq:cond}. 

  3. The flow condition implies $a^1_{v,e} =a^0_{v,e}+f(e)$. 
\end{proof}

Note, that replacing $\rho_v$ by $\rho_v^{-1}$ and simultaneously toggling $\eps_e$ on all $e\in E(v)$ is the usual switching operation on an
embedding scheme and does not change the resulting embedding.
In the rest of this section we will comment on the bijection between CyDCs and solutions to~\eqref{eq:cond}. 
The hard part though is to find whether the system is solvable at all and we will answer
it in Section~\ref{sec:appl} separately for $k=2$ and $k = 3$.

    \begin{figure}[h]
      \centering
      \includegraphics[page=2]{cdc-figs.pdf}
      \caption{Labels $a^0_{v,e}$ and $a^1_{v,e}$.}
      \label{fig:rho}
    \end{figure}

    \begin{figure}[htb]
      \centering
      \begin{tikzpicture}[
          vertex/.style={circle, fill=black, inner sep=0pt, minimum size=5pt},
          edge/.style={very thick},
          path1/.style={blue, thick, rounded corners=4pt},
          path2/.style={red, thick, rounded corners=4pt},
          rotation/.style={semithick, -{Stealth[length=4pt, width=4pt]}}
        ]

        \begin{scope}[shift={(8,0)}]
          \node[vertex, label={left:$u$}] (u) at (0,0) {};
          \node[vertex, label={right:$v$}] (v) at (3,0) {};

          \draw[edge] (u) -- (v) node[midway, below=4pt] {$e$};
          \draw[edge] (u) -- ++(-1, 1) node[left] {$a$};
          \draw[edge] (u) -- ++(-1, -1) node[left] {$b$};
          \draw[edge] (v) -- ++(1, 1) node[right] {$y$};
          \draw[edge] (v) -- ++(1, -1) node[right] {$x$};

          \draw[path1] (-1, 1.2) -- (0, 0.2) -- (3, 0.2) -- (4, 1.2);
          \draw[path2] (-1, -1.2) -- (0, -0.2) -- (3, -0.2) -- (4, -1.2);

          \draw[rotation] ($(u)+(45:0.72)$)
            arc[start angle=45, end angle=90, radius=0.72]
            node[pos=0.52, above right=-1pt] {$\rho_u$};
          \draw[rotation] ($(v)+(225:0.72)$)
            arc[start angle=225, end angle=270, radius=0.72]
            node[pos=0.52, below left=-1pt] {$\rho_v$};

          \node[font=\large] at (1.5, -2) {$\eps_e = 1$};
        \end{scope}

        \begin{scope}[shift={(0,0)}]
          \node[vertex, label=left:$u$] (u) at (0,0) {};
          \node[vertex, label=right:$v$] (v) at (3,0) {};

          \draw[edge] (u) -- (v) node[midway, below=4pt] {$e$};
          \draw[edge] (u) -- ++(-1, 1) node[left] {$a$};
          \draw[edge] (u) -- ++(-1, -1) node[left] {$b$};
          \draw[edge] (v) -- ++(1, 1) node[right] {$y$};
          \draw[edge] (v) -- ++(1, -1) node[right] {$x$};

          \draw[path1] (-1, 1.2) -- (0, 0.2) -- (1.1, 0.2) -- (1.9, -0.2) -- (3, -0.2) -- (4, -1.2);
          \draw[path2] (-1, -1.2) -- (0, -0.2) -- (1.1, -0.2) -- (1.9, 0.2) -- (3, 0.2) -- (4, 1.2);

          \draw[rotation] ($(u)+(45:0.72)$)
            arc[start angle=45, end angle=90, radius=0.72]
            node[pos=0.52, above right=-1pt] {$\rho_u$};
          \draw[rotation] ($(v)+(225:0.72)$)
            arc[start angle=225, end angle=270, radius=0.72]
            node[pos=0.52, below left=-1pt] {$\rho_v$};

          \node[font=\large] at (1.5, -2) {$\eps_e = 0$};
        \end{scope}
      \end{tikzpicture}
      \caption{Interpretation of $\eps_e$.}\label{fig:eps}
    \end{figure}

\begin{definition}
  Let $\mathcal F_k \subset [E \to \Z_2^k]$ be the set of all NZ $\Z_2^k$-flows on $G$.
  Let $\mathcal S_k = (\Z_2^k)^V \times \Z_2^E$ be the possible assignments of
  values to $t$ and $\eps$ in system (\ref{eq:cond}).
  Define $$\calN_k = \{ (f, s) \in \mathcal F_k \times \mathcal S_k : \textrm{$s$ is
  a valid solution of (\ref{eq:cond}) for flow $f$} \}.$$
  Let $\CC_{2^k} \subset [\Z_2^k \to \Z_2^E]$ be the set of all $2^k$-CyDCs of $G$.
  Define $\mu_k: \calN_k \to \CC_{2^k}$ as the mapping
  that assigns every flow and every valid solution of (\ref{eq:cond})
  a $2^k$-CyDC via formula 
  $$ 
    C_i = \{ uv \in E : a^0_{v,uv} = i \mbox{ or }  a^1_{v,uv} = i\}
  $$ 
  for $i \in \Z_2^k$.
\end{definition}


\begin{observation}\label{obs:sol-to-cydc}
  For every $(f, s) \in \calN_k$, $\mu_k(f, s)$ is a valid $2^k$-CyDC.
\end{observation}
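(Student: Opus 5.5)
We need to show two things: every edge of $G$ lies in exactly two of the sets $C_i$, and each $C_i$ is an even subgraph, i.e.\ every vertex meets $C_i$ in $0$ or $2$ edges. The plan is to first make sure the definition of $C_i$ does not depend on which endpoint of $uv$ we read the labels at. Then the edge condition and the vertex condition each follow from one part of Observation~\ref{obs:keyemb}.

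\emph{Well-definedness.} The definition of $C_i$ uses the labels $a^0_{v,uv}, a^1_{v,uv}$ at the endpoint $v$. Since $s$ solves~\eqref{eq:cond}, part~2 of Observation~\ref{obs:keyemb} (edge consistency) gives $a^j_{u,e}=a^{j+\eps_e}_{v,e}$. Hence the unordered pair of labels on $e$ is the same at both endpoints, and membership of $e$ in $C_i$ does not depend on the endpoint chosen.

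\emph{Edge covered exactly twice.} By part~3 of Observation~\ref{obs:keyemb}, $a^1_{v,e}=a^0_{v,e}+f(e)$. Because $f$ is nowhere-zero, the two labels on $e$ are distinct elements of $\Z_2^k$. So $e$ lies in exactly the two sets $C_i$ with $i$ equal to one of these labels. This step is where the nowhere-zero hypothesis is used.

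\emph{Each $C_i$ is even.} Fix a vertex $v$ with $E(v)=\{e,\rho_v(e),\rho_v^2(e)\}$. The six edge-sides at $v$ carry labels $a^0_{v,x}=t_v+f(\rho_v(x))$ and $a^1_{v,x}=t_v+f(\rho_v^{-1}(x))$. By vertex consistency (part~1), $a^1_{v,x}=a^0_{v,\rho_v(x)}$, so these labels pair up into three ``corners'' between consecutive edges. The corner between $x$ and $\rho_v(x)$ has label $t_v+f(\rho_v^{2}(x))$, i.e.\ $t_v$ plus the flow value of the third edge. Since $f$ is nowhere-zero, the three flow values at $v$ are nonzero. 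They sum to zero, so they are pairwise distinct (if two were equal, the third would be $0$). Therefore the three corner labels are distinct.

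Now fix $i$. Either no corner at $v$ has label $i$, and then no edge of $E(v)$ lies in $C_i$. Or exactly one corner has label $i$, and then exactly the two edges bounding it lie in $C_i$. The third edge's two sides are the other two corners, whose labels differ from $i$. In both cases $\deg_{C_i}(v)\in\{0,2\}$, so $C_i$ is even.

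The only delicate point, and thus the main obstacle, is the bookkeeping in this last step. We must check that the labels of the two sides of an edge at $v$ are exactly the two corners adjacent to it, which is just the definition of $0$-side and $1$-side. We must also check that distinctness of the corner labels follows from the flow condition at $v$, as shown above.
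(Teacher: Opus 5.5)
Your proof is correct and follows the paper's argument: part~2 of Observation~\ref{obs:keyemb} makes $C_i$ independent of the chosen endpoint, part~3 together with $f$ nowhere-zero gives the double cover, and part~1 gives evenness of each $C_i$. Your explicit check that the three corner labels $t_v+f(e)$, $e\in E(v)$, are pairwise distinct fills in a detail the paper leaves implicit; it also follows by applying part~3 to each of the three edges at $v$.
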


\begin{proof}
  We will use Observation~\ref{obs:keyemb}. Part~2 shows that the definition of~$C_i$ is 
  independent of the choice of end vertex. Part~3 shows we get a double cover, and 
  part~1 ensures each~$C_i$ is a cycle. 
\end{proof}

\begin{observation}\label{obs:cydc-to-sol}
  The mapping $\mu_k: \calN_k \to \CC_{2^k}$ is a bijection.
\end{observation}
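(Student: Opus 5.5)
The plan is to construct an explicit inverse map $\nu_k:\CC_{2^k}\to\calN_k$. I will then check two things: $\mu_k\circ\nu_k=\mathrm{id}$ (surjectivity), and that every $(f,s)\in\calN_k$ is recovered from $\mu_k(f,s)$ by the formulas defining $\nu_k$ (injectivity). Throughout, $\rho_v$ is the fixed rotation used in the definition of $\mu_k$.

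\emph{Step 1: the flow.} Given a labeled CyDC $(C_i)_{i\in\Z_2^k}$, every edge $e$ lies in exactly two members, with labels $i\neq j$. Define $f(e)=i+j$. Equivalently $f=\sum_{i} i\,\gamma(C_i)$, a $\Z_2^k$-linear combination of cycle flows, so $f$ is a flow. It is nowhere-zero because $i\neq j$. Conversely, if $(f,s)\in\calN_k$, then by Observation~\ref{obs:keyemb}(3) the two labels of $e$ in $\mu_k(f,s)$ sum to $f(e)$. Hence $f$ is determined by the CyDC.

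\emph{Step 2: the local structure at a vertex and $t_v$.} This is the step I expect to need the most care. Let $E(v)=\{e,\rho_v(e),\rho_v^{-1}(e)\}$. Each cycle $C_i$ meets $E(v)$ in $0$ or $2$ edges, and each edge lies in exactly two cycles. It follows that exactly three distinct labels $\alpha,\beta,\gamma$ occur at $v$, and each pair of edges of $E(v)$ shares exactly one label. Concretely, the edges carry $\{\alpha,\beta\},\{\beta,\gamma\},\{\gamma,\alpha\}$.

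Set $t_v=\alpha+\beta+\gamma$. Then $t_v+f(\rho_v(e))$ is the unique label of $E(v)$ not on $\rho_v(e)$, which is the label shared by $e$ and $\rho_v^{-1}(e)$. Similarly, $t_v+f(\rho_v^{-1}(e))$ is the label shared by $e$ and $\rho_v(e)$. So the labels $a^0_{v,e},a^1_{v,e}$ from~\eqref{eq:labels} are exactly the two labels of $e$, placed consistently with Observation~\ref{obs:keyemb}(1).

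For uniqueness in the other direction, summing $a^0_{v,e}$ over $e\in E(v)$ gives $3t_v+\sum_{e\in E(v)}f(e)=t_v$. Hence $t_v$ is forced to be the sum of the three labels seen at $v$ in $\mu_k(f,s)$.

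\emph{Step 3: the signs $\eps_e$.} For $e=uv$, Step 2 shows that $\{a^0_{u,e},a^1_{u,e}\}=\{a^0_{v,e},a^1_{v,e}\}=\{i,j\}$ with $i\neq j$. So there is exactly one $\eps_e\in\Z_2$ with $a^\ell_{u,e}=a^{\ell+\eps_e}_{v,e}$ for both $\ell$. By Observation~\ref{obs:keyemb}(2), this choice makes $(t,\eps)$ a solution of~\eqref{eq:cond}. Uniqueness of $\eps_e$, given $t$ and $f$, follows from the same fact that the two labels differ.

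\emph{Conclusion.} The triple $(f,t,\eps)$ lies in $\calN_k$, and $\mu_k$ maps it to the original family, since the labels on each edge are by construction the original ones; this gives surjectivity. Steps 1--3 show that $f$, then $t$, then $\eps$ are uniquely determined by $\mu_k(f,s)$; this gives injectivity. So $\mu_k$ is a bijection.
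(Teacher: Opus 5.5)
Your proposal is correct and follows essentially the same route as the paper. You use the same flow $f=\sum_i i\,\gamma(C_i)$, the same $t_v$ (the sum of the three labels at $v$), and prove injectivity the same way, by showing that $f$, then $t$, then $\eps$ are recovered from the cover. The only differences are cosmetic: you obtain $\eps_e$ by matching the two label pairs and then appeal to Observation~\ref{obs:keyemb}(2), where the paper gives an explicit combinatorial rule for $\eps_e$; and you explicitly check that $\mu_k$ returns the original cover, which the paper leaves implicit.
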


\begin{proof}
  We prove that it is injective and onto.
  Onto first, for every $\calC \in \CC_{2^k}$ we need to find $(f, s) \in \calN_k$
  so that $\mu_k(f, s) = \calC$.
  Let $(C_i)_{i \in \Z_2^k} = \calC$ be the cycles of $\calC$.

  Define $f = \sum_{i \in \Z_2^k} i \cdot \gamma(C_i)$, i.e., $f$~is the linear combination
  of (the flows defined by) the cycles with coefficients being their labels.
  Note that 
  \begin{equation}
    \text{$f(e) = i + j$ for the unique $i \neq j$ such $e \in C_i \cap C_j$.} \label{eq:fsum}
  \end{equation}
  Hence, $f(e) \neq 0$ and $f$ is a flow because it is a linear combination of flows.

  Now we recover the vertex variables $t_v$. Vertex $v$ is surrounded by three cycles; 
  for each $e \in E(v)$ we let $\ell(e)$ be the label of the cycle incident with~$v$ but not containing~$e$. 
  We put $t_v := \sum_{e \in E(v)} \ell(e)$. Because of \eqref{eq:fsum} we have 
  for every $e \in E(v)$ equality $t_v = \ell(e) + f(e)$, equivalently, $\ell(e) = t_v + f(e)$; 
  we will use it soon. 


  It remains to recover $\eps$. Its value is determined by $t$ but 
  it is not obvious that a choice satisfying system \eqref{eq:cond} exists.
  Let $uv \in E$. Fix $i$ so $uv \in C_i$ and $\rho_{u}(uv) \not\in C_i$, there
  exists exactly one such $i$. We define $\eps_{uv} = 0$ if $\rho_{v}(uv) \not\in C_i$
  and $\eps_{uv} = 1$ otherwise. The equation of the system~\eqref{eq:cond}
  containing $\eps_{uv}$ is:
  \begin{align*}
    (t_u + f(\rho_{u}(uv))) + (t_v + f(\rho_{v}(uv))) &= \eps_{uv}f(uv)
  \end{align*}
  The value $\ell(\rho_{u}(uv)) = t_u + f(\rho_{u}(uv))$ is the label of the cycle containing $uv$ and
  not containing $\rho_{u}(uv)$, and similarly for $v$. Hence they are the same
  if $C_i$ avoids both $\rho_{u}(uv)$ and $\rho_{v}(uv)$ and differ by $f(uv)$ if
  $C_i$ contains $\rho_{v}(uv)$.

  Injectivity: Consider two different
  $(f, (t, \eps)), (f', (t', \eps')) \in \calN_k$. If
  $f \neq f'$ then there is $e \in E$ such that $f(e) \neq f'(e)$ and the cycles on $e$ must
  differ because the flow is the sum of their labels. If $f = f'$ then
  there exists $v \in V$ such that $t(v) \neq t'(v)$
  ($\eps$ is uniquely determined by $t$
  so two solutions cannot differ only in $\eps$).
  But again the sum of the cycle labels
  around a vertex $v$ is $t_v$, so the cycles around $v$ are different.
\end{proof}

\begin{definition}\label{def:sigma}
  Let $G$ be a cubic graph.
  We denote $\CC_*$ the set of all CiDCs of $G$.
  We denote $\sigma: \cup_k \CC_{2^k} \to \CC_*$ a function that maps
  every $2^k$-CyDC of $G$ to the CiDC obtained by decomposing its cycles
  into circuits.\footnote{Because $G$ is cubic, this decomposition is unique.}
\end{definition}

\begin{observation}\label{obs:cidc}
  Let $G$ be a connected cubic graph, fix $\rho_v$ for every vertex~$v$ of~$G$. 
  Let $\calC \in \CC_{2^k}$ and $\calC' \in \CC_{2^{k'}}$.
  Then $\sigma(\calC) = \sigma(\calC')$ if and only
  $\eps = \eps'$ where
  $(f, (t, \eps)) = \mu_k^{-1}(\calC)$ and
  $(f', (t', \eps')) = \mu_{k'}^{-1}(\calC')$.
\end{observation}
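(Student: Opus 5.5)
The plan is to show that $\eps$ encodes exactly the circuit structure: for fixed rotations $\rho_v$, the circuits through each edge are determined by local data that depends only on $\eps$. Concretely, at a vertex $v$ the three cycles around $v$ occupy the three ``corners'' between consecutive edges of $E(v)$. By Observation~\ref{obs:keyemb}(1), the corner between $e$ and $\rho_v(e)$ carries the label $a^1_{v,e}=a^0_{v,\rho_v(e)}$. So a cycle entering $v$ along $e$ on its $1$-side leaves along $\rho_v(e)$, and one on the $0$-side leaves along $\rho_v^{-1}(e)$. This routing at vertices is fixed by $\rho$ alone. Along an edge $uv$, side $j$ at $u$ is glued to side $j+\eps_{uv}$ at $v$. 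Hence the walks obtained by following the ``sides'' (the facial walks of the embedding scheme $(\rho,\eps)$) depend only on $\rho$ and $\eps$.

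\textbf{Step 1 (circuits are facial walks).} I will show that for $\calC=\mu_k(f,(t,\eps))$ the circuits of $\sigma(\calC)$ are exactly the edge sets of the facial walks of the scheme $(\rho,\eps)$. Since $G$ is cubic, a cycle $C_i$ passes through any vertex $v\in V(C_i)$ using exactly two edges of $E(v)$, namely the two edges bounding the corner labelled $i$. Decomposing $C_i$ into circuits therefore amounts to following these corners. Following a corner at $v$, then the edge side with the same label, then the next corner, traces precisely a facial walk. All sides along this walk carry label $i$, by parts (1) and (2) of Observation~\ref{obs:keyemb}. Conversely, each facial walk is contained in a single $C_i$ and is a circuit, since it visits each vertex at most once: two distinct corners at the same vertex have distinct labels. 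Hence $\sigma(\calC)$, viewed as a family of edge sets, equals the set of faces of $(\rho,\eps)$, so $\eps=\eps'$ implies $\sigma(\calC)=\sigma(\calC')$.

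\textbf{Step 2 (converse).} Suppose $\sigma(\calC)=\sigma(\calC')$. For each edge $uv$, consider the circuit containing $uv$ and not containing $\rho_u(uv)$; this is the face on the $0$-side of $uv$ at $u$. It is the same circuit in both double covers. The formula for $\eps_{uv}$ in the proof of Observation~\ref{obs:cydc-to-sol} reads off whether this circuit contains $\rho_v(uv)$. That formula was stated for the cycle $C_i$, but whether $C_i$ contains $\rho_v(uv)$ is decided by its circuit through $uv$, since near $v$ the cycle $C_i$ coincides with that circuit. Hence $\eps_{uv}=\eps'_{uv}$.

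\textbf{Main obstacle.} The delicate point is Step 1's claim that facial walks are genuine circuits, and that circuits of different cycles do not merge. This needs care in the case where a facial walk revisits a vertex $v$ through a different corner. I would settle it by arguing that the three corners at $v$ carry the three distinct labels $\ell(e)=t_v+f(e)$, which are distinct because $f$ is nowhere-zero, together with the uniqueness of the decomposition of a cubic cycle into circuits.
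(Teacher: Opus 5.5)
Your proof is correct and takes essentially the same approach as the paper, which notes that $\eps_e=0$ forces the transitions $a,e,x$ and $b,e,y$ (where $a=\rho_u(e)$ and $x=\rho_v(e)$) while $\eps_e=1$ forces $a,e,y$ and $b,e,x$, so the circuits determine $\eps$ and are determined by it. Your facial-walk tracing in Step~1 and your read-off of $\eps_{uv}$ in Step~2 are a more detailed write-up of this same local argument.
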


\begin{proof}
  Let $uv = e \in E$. Let $\{a,b,e\} = E(u)$, $\{x,y,e\} = E(v)$,
  $a = \rho_{u}(e)$ and $x = \rho_{v}(e)$ as in Figure~\ref{fig:eps}.
  By Observation~\ref{obs:keyemb}, if $\eps_e = 0$ then the cycles containing $e$ contain consecutive 
  edges $a, e, x$ and $b, e, y$, respectively. 
  For $\eps_e = 1$ the transitions are $a, e, y$ and $b, e, x$. 
 
  Hence changing $\eps_e$ changes
  the circuit structure around $e$ ensuring that the corresponding CiDCs
  are different. On the other hand, if $\eps = \eps'$ then
  the circuit structure does not differ anywhere, so the CiDCs are the same.
\end{proof}

\begin{corollary}\label{col:cydc-to-cidc}
  Let $G$ be a connected cubic graph, fix $\rho_v$ for every vertex~$v$ of~$G$. 
  Let $(f, (t, \eps))$, $(f, (t', \eps')) \in \calN_k$.
  Then $\sigma(\mu_k(f, (t, \eps))) = \sigma(\mu_k(f, (t', \eps')))$
  if and only if there exists $\Delta \in \Z_2^k$ such that $t'_v = t_v + \Delta$
  for all $v \in V$.
\end{corollary}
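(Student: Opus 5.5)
By Observation~\ref{obs:cidc}, applied with $k=k'$ and $\calC=\mu_k(f,(t,\eps))$, $\calC'=\mu_k(f,(t',\eps'))$, the two solutions induce the same CiDC if and only if $\eps=\eps'$. Since $\mu_k$ is a bijection by Observation~\ref{obs:cydc-to-sol}, the preimages are exactly the given pairs. It therefore suffices to show that, for a fixed NZ flow $f$, the condition $\eps=\eps'$ is equivalent to $t'-t$ being a constant function $V\to\Z_2^k$.

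For the forward direction, suppose $\eps=\eps'$. Subtract the equation of system~\eqref{eq:cond} for edge $uv$ written for $(t,\eps)$ from the one written for $(t',\eps')$. The right-hand sides $f(\rho_u(uv))+f(\rho_v(uv))$ coincide, because $f$ and the rotations $\rho$ are the same. The terms $\eps_{uv}f(uv)$ cancel, since $\eps=\eps'$. We are left with $(t_u+t'_u)+(t_v+t'_v)=0$, so the difference $d:=t+t'$ takes the same value at both ends of every edge. Because $G$ is connected, $d$ is constant, say $d\equiv\Delta$, and then $t'_v=t_v+\Delta$ for all $v$.

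For the converse, suppose $t'=t+\Delta$. Subtracting the two equations in the same way gives $(\eps_{uv}+\eps'_{uv})f(uv)=0$ for every edge. Since $f$ is nowhere-zero, $f(uv)\neq 0$, and so $\eps_{uv}=\eps'_{uv}$. This is the step where nowhere-zero is essential. Equivalently, it is the remark in the proof of Observation~\ref{obs:cydc-to-sol} that $\eps$ is uniquely determined by $t$. Hence $\eps=\eps'$, and Observation~\ref{obs:cidc} yields equal CiDCs.

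There is no real obstacle. The only care needed is to note that the flow $f$ is shared by both solutions, so the right-hand sides cancel, and to invoke connectivity of $G$ in the forward direction.
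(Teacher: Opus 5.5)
Your proof is correct and follows the paper's argument. Both use Observation~\ref{obs:cidc} to reduce to showing that $\eps=\eps'$ holds exactly when $t+t'$ is constant, get the forward direction by adding the two copies of~\eqref{eq:cond} and using connectivity, and get the converse from $\eps$ being determined by $t$ for a nowhere-zero $f$; your explicit identity $(\eps_{uv}+\eps'_{uv})f(uv)=0$ just spells out that last step.
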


\begin{proof}
  The CiDCs are the same if and only if $\eps = \eps'$
  by Observation~\ref{obs:cidc}, we show that this is equivalent to $t_v + t'_v$ being constant. 
  Forward implication first: adding~\eqref{eq:cond} for $(t,\eps)$ 
  and for $(t',\eps)$ gives us that $t_u + t'_u = t_v + t'_v$ for 
  any edge $uv$. As $G$ is connected, the conclusion follows. 
  The other way around: Replacing $t_v$ by $t'_v = t_v + \Delta$ for all $v \in V$
  in~\eqref{eq:cond} keeps the equations valid, which shows $\eps = \eps'$ is still 
  a solution, so the CiDCs are the same. Moreover, $\eps$ is determined by $t$, 
  so there cannot be another solution. 
\end{proof}

We finish this section with a positive result, that every CiDC can be obtained
from a flow for high enough $k$, and a related negative result that
the system (\ref{eq:cond}) is not always solvable for $k \geq 4$.

\begin{observation}
  Let $G$ be a bridgeless cubic graph on $n$ vertices.
  Let $k \geq \lceil \log_2 n \rceil$. Then $\sigma[\CC_{2^k}] = \CC_*$.
\end{observation}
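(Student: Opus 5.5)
The idea is to show that every CiDC can be realized as $\sigma$ of some $2^k$-CyDC by giving its circuits labels in $\Z_2^k$, and then appeal to the definition of $\sigma$. The inclusion $\sigma[\CC_{2^k}] \subseteq \CC_*$ is immediate from Definition~\ref{def:sigma}, so only surjectivity needs an argument. Fix a CiDC $\calD = \{D_1,\dots,D_c\}$ of $G$. Each circuit has at least $3$ edges (the graph is simple), and every edge lies in exactly two circuits, so $3c \le 2|E| = 3n$, i.e.\ $c \le n$. We have $2^k \ge 2^{\lceil \log_2 n\rceil} \ge n \ge c$, so we can choose an injective map $\lambda:\{1,\dots,c\} \to \Z_2^k$.

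Next I would define the candidate cover $C_i := \bigcup \{D_j : \lambda(j) = i\}$ for $i \in \Z_2^k$. By injectivity each $C_i$ is either empty or a single circuit $D_j$, hence a cycle, and every edge lies in exactly two of the $C_i$, so $(C_i)_{i\in\Z_2^k} \in \CC_{2^k}$. Decomposing each $C_i$ into circuits gives back exactly the nonempty $D_j$. Since the decomposition is unique in cubic graphs (footnote to Definition~\ref{def:sigma}), this yields $\sigma((C_i)) = \calD$. If one wants the statement at the level of $\calN_k$, Observation~\ref{obs:cydc-to-sol} then supplies a flow $f$ and a solution $(t,\eps)$ with $\mu_k(f,(t,\eps)) = (C_i)$.

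The only delicate point is bounding $c$ by $2^k$. Several circuits could share a label without harm as long as the circuits sharing a label are pairwise edge-disjoint, since their union is then still a cycle whose circuit decomposition recovers them. Injectivity is simply the cheapest way to guarantee this, and the counting bound $c \le 2|E|/3 = n$ shows that injectivity is available. This count uses that every circuit has length at least $3$, which holds because $G$ is simple. Bridgelessness is needed only so that $\CC_*$ is nonempty and meaningful; the argument itself uses nothing further.
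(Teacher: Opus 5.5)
Your proof is correct and follows the paper's argument. You bound the number of circuits by $c \le 2m/3 = n \le 2^k$ using that every circuit has length at least $3$, give each circuit its own label in $\Z_2^k$, and fill the unused labels with empty cycles.
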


\begin{proof}
  The inclusion $\sigma[\CC_{2^k}] \subseteq \CC_*$ is trivial.
  For the other one, consider any CiDC with $c$ circuits. As each of the circuits is of length at least~3, 
  we have $3c \le 2m$, so $c \le n \le 2^k$. Consequently, this CiDC comes from a $2^k$-CyDC: 
  we can keep each circuit as a separate cycle (and add empty cycles if needed). 
\end{proof}

This bound cannot be asymptotically improved because for infinitely many~$r$, there
exists an embedding of $K_r$ such that each face is a triangle (in fact, for 
every $r$ congruent to $0$ or $1$ modulo $3$, see \cite{Ringel}). 
So, the corresponding dual of $K_r$ is a cubic graph~$H$. The graph $H$ has $n = \Theta(r^2)$ vertices and this embedding
of it has $r = \Theta(\sqrt{n})$ faces such that each face shares an edge with
every other face. So we need at least $\Omega(\sqrt{n})$ colors and $k = \Omega(\log n)$.

We are going to discuss when a system of linear equations has a solution by checking, whether a linear combination
of them is of form $0=1$. To this end, we study required properties of the coefficients of such combination. 

\begin{definition}
  Let $f: E \to \Z_2^k$ be a flow on $G$. We say a flow $h: E \to \Z_2^k$ is its
  {\em annihilator flow}
  if $\ip{f(e)}{h(e)} = 0$ for all $e \in E$ where $\ip{x}{y}$ is the
  usual inner product defined as $\sum_{i=1}^k x_iy_i$.
  We denote $T_h = \{ v \in V : \forall e \in E(v), h(e) \neq 0 \}$.
\end{definition}

\begin{observation}\label{obs:annih}
  Let $f: E \to \Z_2^k$ be a NZ flow and $h: E \to \Z_2^k$ 
  the coefficients of a linear combination
  of equations of system (\ref{eq:cond}) for $f$. Then the left-hand side of this linear combination
  is zero if and only if $h$ is an annihilator flow of $f$.
\end{observation}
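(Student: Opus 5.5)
The plan is to write out the linear combination explicitly and read off the coefficient of each unknown. System~\eqref{eq:cond} has, for each edge $uv$, one vector equation in $\Z_2^k$, that is, $k$ scalar equations over $\Z_2$. So a linear combination of its equations is given by choosing, for every edge $e$, a vector $h(e)\in\Z_2^k$ whose $i$-th coordinate is the coefficient of the $i$-th coordinate equation of edge $e$. Taking the inner product of the edge-$e$ equation with $h(e)$ and summing over all edges, the left-hand side becomes
\[
  \sum_{uv\in E} \Big( \ip{h(uv)}{t_u} + \ip{h(uv)}{t_v} + \eps_{uv}\,\ip{h(uv)}{f(uv)} \Big).
\]

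Next I regroup this expression by unknowns. The unknowns are the scalars $\eps_e$ and the coordinates of the vectors $t_v$.

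\begin{itemize}
  \item The scalar $\eps_e$ appears only in the equation of edge $e$. Its coefficient is therefore $\ip{h(e)}{f(e)}$.
  \item The vector $t_v$ appears exactly in the equations of the edges in $E(v)$. Collecting these terms gives $\ip{\sum_{e\in E(v)} h(e)}{t_v}$.
  \item Consequently, the coefficient of the $i$-th coordinate of $t_v$ is the $i$-th coordinate of $\sum_{e\in E(v)} h(e)$.
\end{itemize}

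The left-hand side is the zero linear form exactly when all these coefficients vanish. This means two things hold:
\begin{itemize}
  \item $\sum_{e\in E(v)} h(e)=0$ for every vertex $v$, which says that $h$ is a $\Z_2^k$-flow;
  \item $\ip{h(e)}{f(e)}=0$ for every edge $e$.
\end{itemize}
Together these two conditions are precisely the definition of $h$ being an annihilator flow of $f$. Both directions of the equivalence follow at once, since each step above is an ``if and only if''.

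There is no real obstacle here. The only point needing care is the bookkeeping: a combination of the vector equations must be encoded by vectors $h(e)$ via the inner product, and each $t_v$ must be treated coordinatewise, so that ``the form in $t_v$ vanishes'' translates correctly into the vector identity $\sum_{e\in E(v)}h(e)=0$.
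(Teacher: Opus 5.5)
Your proof is correct and matches the paper's argument. The paper also expands the combination as $\sum_{uv\in E}\bigl(\ip{t_u}{h(uv)}+\ip{t_v}{h(uv)}+\eps_{uv}\ip{f(uv)}{h(uv)}\bigr)$ and reads off that the coefficients of the $t_v$ and $\eps_e$ vanish exactly when $h$ is a flow with $\ip{f(e)}{h(e)}=0$ on every edge; your coordinatewise treatment of $t_v$ just makes that bookkeeping more explicit.
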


\begin{proof}
  The linear combination of the left-hand side is
  $$ \sum_{uv \in E} (\ip{t_u}{h(uv)} + \ip{t_v}{h(uv)} +
        \eps_{uv}\ip{f(uv)}{h(uv)}).
  $$
  It is zero if and only if the coefficients of all variables are zero:
  $$\sum_{e \in E(v)} h(e) = 0 \quad \forall v \in V
    \quad \textrm{and} \quad
    \ip{f(e)}{h(e)} = 0 \quad \forall e \in E.$$
  That is precisely definition of an annihilator flow of $f$.
\end{proof}

\begin{corollary}\label{cor:num-sols}
  Let $f: E \to \Z_2^k$ be a NZ flow such that the system~\eqref{eq:cond}
  is solvable and let $d$ be the dimension of the space of all annihilator flows of $f$.
  Then the solution space of system~\eqref{eq:cond} has dimension
  $d + n(3-k)/2$ (and hence $2^{d + n(3-k)/2}$ solutions).
\end{corollary}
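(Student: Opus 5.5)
The dimension count is plain linear algebra: rank--nullity for system~\eqref{eq:cond}, with Observation~\ref{obs:annih} identifying the left kernel. The system is affine over $\Z_2$, so since it is solvable by assumption, its solution set is a coset of the solution space of the homogeneous system. The dimension we want is therefore (number of unknowns) minus (rank of the coefficient matrix), and we compute the rank from the left kernel.

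First we count variables and scalar equations. Each $t_v \in \Z_2^k$ contributes $k$ scalar unknowns and each $\eps_e$ contributes one, giving $kn + m$ unknowns. Each edge equation is an equation in $\Z_2^k$, giving $k$ scalar equations per edge, for $km$ equations in total. Since $G$ is cubic, $m = 3n/2$.

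Next we identify the left kernel. A linear combination of the scalar equations is specified by choosing, for each edge $e$, a coefficient vector $h(e) \in \Z_2^k$ that pairs with the $k$ coordinates of the equation for $e$ via the inner product. The space of such combinations is thus $[E \to \Z_2^k]$, of dimension $km$. By Observation~\ref{obs:annih}, the combination has vanishing left-hand side exactly when $h$ is an annihilator flow of $f$. So the left kernel of the coefficient matrix is the space of annihilator flows, of dimension $d$. Hence the rank is $km - d$.

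Finally we subtract. The solution space has dimension
\[
kn + m - (km - d) = d + kn + (1-k)\tfrac{3n}{2} = d + \tfrac{n(3-k)}{2},
\]
and it has $2^{d + n(3-k)/2}$ elements.

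The only delicate step is the bookkeeping in the middle paragraph. We must check that the coefficient space really is all of $[E\to\Z_2^k]$, with no further constraint on $h$. We must also check that Observation~\ref{obs:annih} characterizes exactly the left null space, and not merely combinations that produce an inconsistency. Its proof shows precisely this: the left-hand side vanishes identically iff all variable coefficients vanish. Solvability is used only to ensure the solution set is a nonempty coset, so that its size equals that of the kernel.
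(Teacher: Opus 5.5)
Your proposal is correct and follows the paper's proof: you count $kn + \tfrac32 n$ binary unknowns and $\tfrac32 nk$ binary equations, identify the left kernel with the space of annihilator flows via Observation~\ref{obs:annih}, and apply rank--nullity to the nonempty affine solution set. You also make explicit two points the paper leaves implicit: that the coefficient space is all of $[E\to\Z_2^k]$, and that solvability is what turns the kernel dimension into the size of the solution set.
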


\begin{proof}
  We interpret $t$ as $nk$ binary variables and $\eps$ as $\tfrac32n$ binary variables. 
  System~\eqref{eq:cond} has $(\tfrac32n)k$~binary equations. 
  The left kernel of the underlying binary matrix has dimension~$d$, so basic linear algebra gives us 
  the solution space of dimension $nk + \tfrac32n - (\tfrac32n)k + d$.
\end{proof}

\begin{definition}
  Let $f: E \to \Z_2^k$ be a NZ flow, $h: E \to \Z_2^k$ its annihilator flow,
  and $v \in V$.
  We define {\em vertex parity} $\lambda_v = \ip{f(\rho_{v}(e))}{h(e)}$ for any $e \in E(v)$.
\end{definition}

\begin{observation}\label{obs:annih-vert}
  Vertex parity is well-defined and independent of the choice of $\rho_v$. 
\end{observation}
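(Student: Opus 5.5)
We have to show two things: that $\lambda_v$ does not depend on which $e\in E(v)$ is used, and that it does not change when $\rho_v$ is replaced by $\rho_v^{-1}$ (these are the only two cyclic orders on the three edges of $E(v)$). Write $E(v)=\{e_1,e_2,e_3\}$ with $\rho_v(e_1)=e_2$, $\rho_v(e_2)=e_3$, $\rho_v(e_3)=e_1$. Abbreviate $f_i=f(e_i)$ and $h_i=h(e_i)$.

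Only three facts are needed.
\begin{itemize}
\item[(a)] The flow condition for $f$ at $v$: $f_1+f_2+f_3=0$.
\item[(b)] The flow condition for $h$ at $v$: $h_1+h_2+h_3=0$.
\item[(c)] The annihilator property: $\ip{f_i}{h_i}=0$ for $i=1,2,3$.
\end{itemize}

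First, independence of $e$. The three candidate values are $\ip{f_2}{h_1}$, $\ip{f_3}{h_2}$ and $\ip{f_1}{h_3}$. By (a), $f_2=f_1+f_3$, so
\[
\ip{f_2}{h_1}=\ip{f_1}{h_1}+\ip{f_3}{h_1}=\ip{f_3}{h_1},
\]
using (c). By (b), $h_1=h_2+h_3$, so
\[
\ip{f_3}{h_1}=\ip{f_3}{h_2}+\ip{f_3}{h_3}=\ip{f_3}{h_2},
\]
again using (c). Hence $\ip{f_2}{h_1}=\ip{f_3}{h_2}$. The same two-step computation with indices shifted cyclically gives $\ip{f_3}{h_2}=\ip{f_1}{h_3}$. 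So all three values coincide.

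Second, independence of $\rho_v$. Replacing $\rho_v$ by $\rho_v^{-1}$ changes the value at $e_1$ from $\ip{f_2}{h_1}$ to $\ip{f_3}{h_1}$. The first step of the computation above already showed $\ip{f_2}{h_1}=\ip{f_3}{h_1}$, so these are equal.

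In other words, $\ip{f(e')}{h(e)}$ takes the same value for every ordered pair of distinct edges $e,e'\in E(v)$. This symmetric formulation handles both claims at once.

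There is no real obstacle. The only care needed is to apply (a) and (c) in the first step and (b) and (c) in the second step in the right order. Note that nowhere-zeroness of $f$ is not used.
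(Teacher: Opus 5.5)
Your proof is correct and follows essentially the same route as the paper: rewrite one argument of the inner product using the flow condition for $f$ or for $h$ at $v$, kill the diagonal term $\ip{f(e_i)}{h(e_i)}$ by the annihilator property, and repeat cyclically. Your explicit handling of $\rho_v^{-1}$ via $\ip{f(e_2)}{h(e_1)}=\ip{f(e_3)}{h(e_1)}$ spells out a point the paper leaves implicit; there it appears only as the intermediate term $\ip{f(e_2)}{h(e_3)}$ of its chain.
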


\begin{proof}
  Let $\{e_1,e_2,e_3\}=E(v)$, indexed so that
  $\rho_{v}(e_i)=e_{i+1}$, with indices taken modulo~$3$. Observe that
  \begin{align*}
    \ip{f(e_2)}{h(e_1)}
      &= \ip{f(e_2)}{h(e_2)+h(e_3)}
       = \ip{f(e_2)}{h(e_3)} \\
      &= \ip{f(e_1)+f(e_3)}{h(e_3)}
       = \ip{f(e_1)}{h(e_3)}.
  \end{align*}
  Applying the same argument cyclically shows that the three values
  $\ip{f(\rho_{v}(e_i))}{h(e_i)}$ are equal.
\end{proof}

\begin{observation}\label{obs:annih-3}
  Let $f: E \to \Z_2^k$ be a NZ flow and $h: E \to \Z_2^k$ its annihilator flow.
  Then the right-hand side of the linear combination of system \eqref{eq:cond}
  with coefficients $h$ is zero if and only if
  $\sum_{v \in T_h} \lambda_v = 0$.
\end{observation}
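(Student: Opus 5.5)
We expand the right-hand side of the linear combination directly and regroup it vertex by vertex, using Observation~\ref{obs:annih-vert}.

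The combination with coefficients $h$ takes the inner product of the equation for $uv$ with $h(uv)$ and sums over all edges. Its right-hand side is therefore
\[
  \sum_{uv\in E} \Bigl(\ip{h(uv)}{f(\rho_u(uv))} + \ip{h(uv)}{f(\rho_v(uv))}\Bigr).
\]
Each edge contributes one term for each of its two endpoints. Regrouping by the endpoint, this equals
\[
  \sum_{v\in V}\ \sum_{e\in E(v)} \ip{f(\rho_v(e))}{h(e)} .
\]
Note that $\rho_v(e)$ is the rotation at the same vertex $v$ to which the term is assigned.

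Fix a vertex $v$. By Observation~\ref{obs:annih-vert}, the three inner summands $\ip{f(\rho_v(e))}{h(e)}$ for $e\in E(v)$ all equal $\lambda_v$. This uses only that $h$ is an annihilator flow of the NZ flow $f$, which is our hypothesis. Hence the inner sum is $3\lambda_v=\lambda_v$ over $\Z_2$, and the right-hand side becomes $\sum_{v\in V}\lambda_v$.

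It remains to show that $\lambda_v=0$ for every $v\notin T_h$. If $v\notin T_h$, then some $e\in E(v)$ has $h(e)=0$. Computing $\lambda_v$ with this edge gives $\ip{f(\rho_v(e))}{0}=0$. Since $\lambda_v$ does not depend on the choice of $e$ (Observation~\ref{obs:annih-vert}), we get $\lambda_v=0$. Therefore the right-hand side equals $\sum_{v\in T_h}\lambda_v$, which proves the claim.

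The main point needing care is that the regrouping really produces, at each vertex, the full cyclic sum over all three edges of $E(v)$ with that vertex's own rotation $\rho_v$. This holds because each edge $uv$ contributes $f(\rho_u(uv))$ to $u$ and $f(\rho_v(uv))$ to $v$. Everything else is immediate from the well-definedness of vertex parity.
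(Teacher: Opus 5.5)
Your proof is correct and follows the paper's argument: regroup the right-hand side by endpoints into $\sum_{v\in V}\sum_{e\in E(v)}\ip{f(\rho_v(e))}{h(e)}$, use Observation~\ref{obs:annih-vert} to collapse each inner sum to $\lambda_v$, and note that $\lambda_v=0$ whenever some $h(e)$ at $v$ vanishes. You also state explicitly the step $3\lambda_v=\lambda_v$ over $\Z_2$, which the paper uses without comment.
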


\begin{proof}
  We reorder the sum and use vertex parity
  $$ 
  \sum_{uv \in E} \bigl(\ip{f(\rho_{u}(uv))}{h(uv)} + \ip{f(\rho_{v}(uv))}{h(uv)}\bigr)
    = \sum_{v \in V} \sum_{e \in E(v)} \ip{f(\rho_{v}(e))}{h(e)}
    = \sum_{v \in V \setminus T_h} \lambda_v + \sum_{v \in T_h} \lambda_v
  $$
  If $v \not\in T_h$, then there is $e \in E(v)$ such that $h(e) = 0$, so $\lambda_v = 0$.
  What remains is the sum over $v \in T_h$.
\end{proof}

\begin{corollary}
The system~\eqref{eq:cond} is solvable if and only if
$
  \sum_{v \in T_h} \lambda_v = 0
$
for every annihilator flow $h$.
\end{corollary}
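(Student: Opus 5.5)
This is the Fredholm alternative over $\Z_2$ for the linear system~\eqref{eq:cond}, and I would derive it by combining Observations~\ref{obs:annih} and~\ref{obs:annih-3}. First I fix the setting. For the fixed NZ flow $f$, the system~\eqref{eq:cond} is a linear system over $\Z_2$ in the $nk + \tfrac32 n$ binary unknowns (the coordinates of the $t_v$ and the $\eps_e$). It has one binary equation for each pair (edge $uv$, coordinate $i\in\{1,\dots,k\}$). A general linear combination of these equations is therefore specified by choosing, for each edge $e$, a vector $h(e)\in\Z_2^k$ whose $i$-th coordinate is the coefficient of the $i$-th coordinate equation on $e$. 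Such a combination is exactly the one written in the proof of Observation~\ref{obs:annih}, with the inner products $\ip{\cdot}{h(uv)}$.

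Next I invoke the standard criterion. A linear system $Ax=b$ over a field is solvable if and only if there is no vector $y$ with $y^TA=0$ and $y^Tb\neq 0$. Equivalently, every linear combination of the equations whose left-hand side vanishes identically also has vanishing right-hand side. This follows from $\operatorname{im}A = (\ker A^T)^\perp$, and it is also what Gaussian elimination gives. So solvability is equivalent to the following statement: for every $h:E\to\Z_2^k$ for which the combined left-hand side is zero, the combined right-hand side is zero too.

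Then I translate both conditions. By Observation~\ref{obs:annih}, the combined left-hand side is zero exactly when $h$ is an annihilator flow of $f$. For such $h$, Observation~\ref{obs:annih-3} says that the combined right-hand side is zero exactly when $\sum_{v\in T_h}\lambda_v=0$, where $\lambda_v$ is well defined by Observation~\ref{obs:annih-vert}. Substituting these into the criterion gives the corollary in both directions.

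The only step requiring care is the bookkeeping in the first step. I need to check that arbitrary coefficient vectors on the $k$ coordinate equations of each edge really correspond to a function $h:E\to\Z_2^k$. I also need to check that the combined right-hand side is $\sum_{uv}\ip{f(\rho_u(uv))+f(\rho_v(uv))}{h(uv)}$. Both are immediate from the definitions, so I expect no real obstacle.
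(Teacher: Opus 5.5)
Your proposal is correct and matches the paper's intended argument. The paper states this corollary without a separate proof, as the immediate combination of the ``no linear combination of the form $0=1$'' criterion with Observations~\ref{obs:annih} and~\ref{obs:annih-3}, which is exactly what you carry out.
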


\section{Applications}
\label{sec:appl}

In this section we show several applications of the general technique developed so far. 

\subsection{3-Edge-Colorable Cubic Graphs and $k = 2$}\label{sec:kfour}

For this section we fix $G$ to be a 2-connected cubic 3-edge-colorable graph.
By Theorem~\ref{thm:4NZF}, it has a nowhere-zero flow $f: E \to \Z_2^2$.
(It can be obtained as interpreting the three edge colors as non-zero
elements of $\Z_2^2$.)
The fact that 3-edge-colorable cubic graphs have a CyDC goes back to
Szekeres~\cite{szekeres}.

\begin{theorem}[Tutte~\cite{tutte49}]\label{thm:4NZF}
  Every 3-edge-colorable cubic graph has NZ $\Z_2^2$-flow.
\end{theorem}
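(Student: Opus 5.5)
The plan is to prove the two directions separately. For the forward direction I will show that a 3-edge-coloring yields a nowhere-zero $\Z_2^2$-flow, and for the converse I will show that a nowhere-zero $\Z_2^2$-flow yields a 3-edge-coloring. The argument is the standard one via the structure of $\Z_2^2$: its three nonzero elements $\alpha,\beta,\gamma$ satisfy $\alpha+\beta+\gamma=0$, and any two distinct nonzero elements never sum to $0$ or to themselves.

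For the forward direction, fix a 3-edge-coloring $c\colon E\to\{1,2,3\}$. Biject the colors with the nonzero elements of $\Z_2^2$, say $1\mapsto(1,0)$, $2\mapsto(0,1)$, $3\mapsto(1,1)$, and set $f(e)$ to be the image of $c(e)$. The coloring is proper and $G$ is cubic, so each vertex $v$ sees each color exactly once on $E(v)$. Hence $\sum_{e\in E(v)}f(e)=(1,0)+(0,1)+(1,1)=0$, which is the flow condition. Also $f(e)\neq0$ everywhere, so $f$ is a NZ $\Z_2^2$-flow. This direction is a one-line computation and is all that the statement, as quoted, requires.

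For the converse (the usual companion of Tutte's theorem), let $f$ be a NZ $\Z_2^2$-flow. At a vertex $v$ with $E(v)=\{e_1,e_2,e_3\}$, suppose $f(e_1)=f(e_2)$. Then the flow condition forces $f(e_3)=0$, a contradiction. So the three values at $v$ are distinct nonzero elements, and $f$ itself is a proper 3-edge-coloring.

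I expect no real obstacle. The only point needing care is that $G$ is cubic, so that every color appears exactly once at each vertex. The proof does not use 2-connectivity, and loops do not occur for simple cubic graphs.
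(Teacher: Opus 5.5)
Your proof is correct and follows the same route the paper indicates in its parenthetical remark: identify the three colors with the three nonzero elements of $\Z_2^2$, so that the values at each vertex sum to $(1,0)+(0,1)+(1,1)=0$. The converse you add is also correct, but the statement as quoted does not need it.
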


In this section, we make the choice of $\rho_v$ dependent on the flow, or 
rather, on the edge-coloring $c : E \to \Z_3$; 
we choose each $\rho_v$ so that $c(\rho_v(e)) = c(e)+1 \pmod 3$. 
Then we have $f(\rho_{u}(uv)) = f(\rho_{v}(uv))$ (as $f$ is a lift of~$c$). 
For this choice, the right-hand side of system (\ref{eq:cond}) is zero,
so it is trivially solvable.
Define linear function $\alpha: \Z_2^2 \to \Z_2^2$ via formula
$\alpha(x_1, x_2) = (x_2, x_1)$.
Observe that $\ip{x}{\alpha(x)} = 0$ and let $\alpha(f)(e) = \alpha(f(e))$.

\begin{observation}\label{obs:annih-k2}
  Let $f$ be a NZ $\Z_2^2$-flow on a connected cubic graph $G$.
  Then there exist only two annihilator flows: zero and $\alpha(f)$.
\end{observation}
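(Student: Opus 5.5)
We need to show that if $h$ is an annihilator flow of a NZ $\Z_2^2$-flow $f$ on a connected cubic graph, then $h \in \{0, \alpha(f)\}$. Conversely, both of these are annihilator flows: $0$ trivially, and $\alpha(f)$ because $\alpha$ is linear (so $\alpha(f)$ is a flow) and $\ip{x}{\alpha(x)} = x_1x_2 + x_2x_1 = 0$.

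The plan is a local analysis of $h$ edge by edge, followed by propagation along edges using connectivity.

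\textbf{Step 1: the value of $h$ on a single edge.} For a nonzero $x \in \Z_2^2$, the set $\{y : \ip{x}{y} = 0\}$ is a one-dimensional subspace, hence equal to $\{0, \alpha(x)\}$. Indeed, the orthogonal complement has dimension $1$, and it contains $\alpha(x)$, which is nonzero because $\alpha$ is a bijection. Therefore, for every edge $e$, the annihilator condition forces $h(e) \in \{0, \alpha(f(e))\}$. Write $h(e) = s_e\,\alpha(f(e))$ with $s_e \in \Z_2$.

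\textbf{Step 2: the flow condition at a vertex.} Let $v$ be a vertex with $E(v) = \{e_1, e_2, e_3\}$. Since $f$ is a nowhere-zero $\Z_2^2$-flow, the values $f(e_1), f(e_2), f(e_3)$ are nonzero and sum to zero. Hence they are exactly the three distinct nonzero elements of $\Z_2^2$, and they are linearly dependent only through their total sum, i.e., any two of them are linearly independent. The same holds for their images under the bijective linear map $\alpha$. The flow condition for $h$ at $v$ reads $\sum_i s_{e_i}\,\alpha(f(e_i)) = 0$. The only vanishing subsums of three pairwise-independent vectors with total sum zero are the empty subsum and the full one. So $s_{e_1} = s_{e_2} = s_{e_3}$; that is, $s$ is constant on $E(v)$.

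\textbf{Step 3: propagation.} Consecutive edges of any walk share a vertex, so by Step 2 they carry the same value of $s$. Since $G$ is connected, any two edges are joined by such a walk, and therefore $s$ is constant on all of $E$. Thus $h = 0$ or $h = \alpha(f)$, which proves the observation.

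None of these steps is a real obstacle. The only point needing care is Step 2, namely checking that the three flow values at a vertex are exactly the three nonzero elements of $\Z_2^2$, so that no proper nonempty subset of their $\alpha$-images sums to zero.
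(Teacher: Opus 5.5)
Your proof is correct and follows essentially the same route as the paper: the annihilator condition forces $h(e)\in\{0,\alpha(f(e))\}$, the flow condition at each vertex makes $h$ all-zero or all-nonzero on $E(v)$, and connectivity propagates this to the whole graph. Your version is a bit more explicit, since you also verify that $\alpha(f)$ is itself an annihilator flow and spell out why no proper nonempty subsum of the three $\alpha$-images vanishes.
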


\begin{proof}
  Zero is obviously an annihilator flow. Consider $h$ an annihilator
  flow that is non-zero on some edge $e$. The value $h(e)$ must be $\alpha(f(e))$
  because it is the only non-zero value such that $\ip{h(e)}{f(e)} = 0$.
  Look at vertex $v$ such that $e \in E(v)$. At least one other edge $e_2$ of $E(v)$
  must have $h(e_2)$ non-zero, so $h(e_2) = \alpha(f(e_2))$. Consequently, even 
  the third edge must have non-zero $h$. This propagates through the whole graph
  due to connectivity.
\end{proof}

\begin{observation}\label{obs:num-4cydc}
  Let $f$ be a NZ $\Z_2^2$-flow on a connected cubic graph $G$. Then the construction
  above gives exactly $2^{n/2 + 1}$ different 4-CyDCs of $G$.
\end{observation}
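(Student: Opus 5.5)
The count follows by combining the bijection of Observation~\ref{obs:cydc-to-sol} with the dimension count of Corollary~\ref{cor:num-sols}, using the annihilator description from Observation~\ref{obs:annih-k2}.

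\textbf{The system is solvable.} With the flow-dependent choice of $\rho_v$ made above, the right-hand side of~\eqref{eq:cond} vanishes. Hence the system is homogeneous and in particular solvable.

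\textbf{Dimension of the solution space.} By Observation~\ref{obs:annih-k2}, the space of annihilator flows of $f$ is $\{0,\alpha(f)\}$. Here $\alpha(f)$ is indeed a flow, because $\alpha$ is linear, and it is nonzero, because $f$ is nowhere-zero. So this space has dimension $d=1$. Corollary~\ref{cor:num-sols} with $k=2$ then gives a solution space of dimension $d + n(3-k)/2 = 1 + n/2$, that is, exactly $2^{n/2+1}$ solutions $s=(t,\eps)$ with $(f,s)\in\calN_2$.

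Before relying on this, I will check the corollary's count directly. There are $2n$ variables in $t$ and $\tfrac32 n$ variables in $\eps$, against $3n$ scalar equations. The left kernel of the coefficient matrix consists exactly of the coefficient vectors $h$ that make the left-hand side vanish. By Observation~\ref{obs:annih} these are precisely the annihilator flows, so the left kernel has dimension~$1$. Rank--nullity then gives $2n+\tfrac32 n - (3n-1) = n/2+1$.

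\textbf{Distinct solutions give distinct covers.} By Observation~\ref{obs:cydc-to-sol}, $\mu_2$ is a bijection from $\calN_2$ onto the set of $4$-CyDCs. Restricted to the fixed flow $f$, it is injective, so the $2^{n/2+1}$ solutions yield exactly $2^{n/2+1}$ pairwise different $4$-CyDCs.

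The only point needing care is whether ``different'' counts labeled CyDCs as in Definition~\ref{def:cydc}; it does, since $\mu_2$ produces ordered tuples indexed by $\Z_2^2$. Everything else is a direct application of the earlier results, so I expect no step to be a real obstacle.
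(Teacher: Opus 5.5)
Your proposal is correct and follows the paper's proof: it takes $d=1$ from Observation~\ref{obs:annih-k2} and plugs it into Corollary~\ref{cor:num-sols} to get a solution space of dimension $n/2+1$. Your explicit remarks on the vanishing right-hand side and on injectivity of $\mu_2$ (Observation~\ref{obs:cydc-to-sol}) only spell out steps the paper leaves implicit.
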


\begin{proof}
  By Corollary~\ref{cor:num-sols} and Observation~\ref{obs:annih-k2},
  the dimension of the space of solutions is $1 + n(3 - 2)/2 = 
  n/2 + 1$.
\end{proof}

\begin{proof}[Proof of Theorem~\ref{thm:3-col}]
Fix a proper 3-edge-coloring of $G$ and the corresponding NZ $\Z_2^2$-flow $f$.
By Observation~\ref{obs:num-4cydc}, the solutions of system (\ref{eq:cond}) give
$2^{n/2+1}$ pairwise different 4-CyDCs. And only four CyDCs are
mapped to the same CiDC due to Corollary~\ref{col:cydc-to-cidc}.
Hence the number of CiDCs of $G$ is at least $2^{n/2+1} / 4 = 2^{n/2-1}$.
\end{proof}

\subsection{General cubic graphs and $k = 3$}\label{sec:existence}

We fix $k = 3$ and remind the reader of existence Theorem~\ref{thm:8NZF}.

\begin{theorem}[Jaeger~\cite{jaeger}, Kilpatrick~\cite{kilpatrick}]\label{thm:8NZF}
  Every cubic bridgeless graph has nowhere-zero $\Z_2^3$-flow.
\end{theorem}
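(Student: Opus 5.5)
The plan is to reduce the existence of a nowhere-zero $\Z_2^3$-flow to covering $E$ by three cycles, in the same spirit as the proof of Observation~\ref{obs:cydc-to-sol}. If $C_1,C_2,C_3$ are cycles of $G$ with $C_1\cup C_2\cup C_3=E$, then
\[
  f=e_1\gamma(C_1)+e_2\gamma(C_2)+e_3\gamma(C_3),
\]
with $e_1,e_2,e_3$ the standard basis of $\Z_2^3$, is a flow, being a linear combination of flows. Moreover $f(e)$ has a $1$ in coordinate $i$ exactly when $e\in C_i$, so $f(e)\ne 0$ for every edge. It therefore suffices to show that every bridgeless (cubic) graph has three cycles covering its edges.

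To build the cycles, I will use spanning trees. The basic tool is the following fact. For a spanning tree $T$ of the connected graph $G$ and any $X\subseteq E\setminus T$, there is a unique cycle $C$ with $C\setminus T=X$. Namely, $C$ is the sum of the fundamental circuits of the edges of $X$ with respect to~$T$. Taking $X=E\setminus T$ gives a cycle containing all of $E\setminus T$. Hence, if $G$ has spanning trees $T_1,T_2,T_3$ with $T_1\cap T_2\cap T_3=\emptyset$, then the corresponding cycles $C_i\supseteq E\setminus T_i$ cover $E$, and we are done. Equivalently, we need to write $E$ as a union of three sets $E\setminus T_i$, each of which is a complement of a spanning tree, i.e.\ a spanning set of the cographic (bond) matroid $M^*(G)$. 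By monotonicity, this is the same as covering $E$ by three sets whose removal leaves $G$ connected.

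The main obstacle is producing these three trees with empty common intersection; this is where bridgelessness must enter. My first attempt is the Nash-Williams--Tutte tree-packing theorem applied to a multiplied graph. Bridgeless means $2$-edge-connected, so $3G$ (every edge tripled) is $6$-edge-connected. It therefore contains three edge-disjoint spanning trees, which project to spanning trees $T_1,T_2,T_3$ of $G$ in which each edge of $G$ is used at most three times in total.

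This does not by itself exclude an edge lying in all three trees. I would therefore instead formulate the requirement as a covering problem for $M^*(G)$ and apply the matroid union (Edmonds covering) theorem. The ground set $E$ is a union of three independent sets of $M^*(G)$ iff $|A|\le 3\,r^*(A)$ for all $A\subseteq E$. Here $r^*(A)=|A|-r(E)+r(E\setminus A)$, so the condition reads $2|A|\ge 3\,(r(E)-r(E\setminus A))$.

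This inequality is checked via cuts. Removing $A$ splits $G$ into $q=1+r(E)-r(E\setminus A)$ components. Since $G$ is $2$-edge-connected, each component sends at least two edges of $A$ out of itself, and each edge of $A$ is counted at most twice. This gives $|A|\ge q$ plus the contribution of the edges of $A$ inside components. That is not yet the factor $3/2$. To close the gap I would use cubicness and the fact that any cut isolating a proper vertex set in a bridgeless cubic graph has size at least~$2$, with odd-size components forcing cuts of size $\ge3$; a parity count then shows that enough edges are cut. If this counting turns out too tight, the fallback is Jaeger's original route: choose two trees via $2G$, which is $4$-edge-connected, and fix up the edges of $T_1\cap T_2$ with a third cycle. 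The main work is verifying that $G-(T_1\cap T_2)$ stays connected, or modifying the trees until it does.
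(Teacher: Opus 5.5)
The paper gives no proof of this theorem; it only quotes Jaeger and Kilpatrick. Your argument therefore has to stand on its own, and as written it has a genuine gap.

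\textbf{The inequality you need is false.} Your first steps are fine: three cycles covering $E$ give the nowhere-zero flow $\sum_i e_i\gamma(C_i)$, and three spanning trees with $T_1\cap T_2\cap T_3=\emptyset$ give such cycles. The problem is the key step. This sufficient condition, equivalently the covering inequality $|A|\le 3r^*(A)$, fails for bridgeless cubic graphs with $2$-edge-cuts, so no parity count can prove it. Take $p\ge 4$ copies of $K_4$ minus an edge and join them in a ring by $p$ edges between their degree-$2$ vertices. The result is simple, cubic and bridgeless. However, any two of the $p$ ring edges form a bond, so the ring edges are pairwise parallel in $M^*(G)$. For $A$ the set of ring edges this gives $r^*(A)=1<p/3$. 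You can also see it directly: contracting the beads maps any spanning tree onto a connected spanning subgraph of the cycle $C_p$. Hence every spanning tree contains at least $p-1$ ring edges, and any three spanning trees share a ring edge.

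Your parity remark only gives $d(X)\ge 3$ for $|X|$ odd, where $d(X)$ is the number of edges leaving $X$. Parts of even size can have exactly two outgoing edges, and then $2|A|=\sum_j d(P_j)=2q<3(q-1)$. The fallback fails on the same graph. $T_1\cap T_2$ contains at least $p-2\ge 2$ ring edges, so $G-(T_1\cap T_2)$ is disconnected. In any case, connectivity of $G-(T_1\cap T_2)$ is equivalent to the existence of $T_3$ with $T_1\cap T_2\cap T_3=\emptyset$, so it is the same condition again. Also, $3G$ is the wrong multiplier. You want three edge-disjoint spanning trees of $2G$, since those put every edge of $G$ in at most two of the trees.

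\textbf{The missing step is a reduction to the $3$-edge-connected case}, where your argument does work. Suppose $\{e,e'\}$ is a $2$-edge-cut. Then $G/e$ is still bridgeless, because its cuts are exactly the cuts of $G$ not containing $e$, and it has fewer edges. By induction $G/e$ has a nowhere-zero $\Z_2^3$-flow. Extend it by defining $f(e)$ through conservation at one end of $e$; conservation at the other end then follows. Summing the conservation equations over one shore of the cut gives $f(e)=f(e')\neq 0$.

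Since $G/e$ is not cubic, the induction must run over all bridgeless multigraphs. To stay cubic instead, split $G$ along the cut into two smaller cubic graphs. Then glue their flows after applying an element of $GL_3(\Z_2)$ that matches the values on the two new edges.

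For $3$-edge-connected $G$, each component $P_j$ of $G-A$ has at least three outgoing edges. So for $q\ge 2$ you get $2|A|\ge\sum_j d(P_j)\ge 3q$, and your Edmonds computation goes through. Equivalently, $2G$ is $6$-edge-connected, and Nash-Williams--Tutte gives three edge-disjoint spanning trees of $2G$. This is essentially Jaeger's original argument.
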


The following lemma is a reformulation of part of OpenAI proof~\cite{openai}.

\begin{observation}\label{obs:align}
  For every $f \in \mathcal F_3$,
  the system of equations (\ref{eq:cond}) has a solution.
\end{observation}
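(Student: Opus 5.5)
The plan is to verify the criterion of the Corollary following Observation~\ref{obs:annih-3}. Fix $f\in\mathcal F_3$ and an arbitrary annihilator flow $h$. It suffices to prove $\sum_{v\in T_h}\lambda_v=0$. By the proof of Observation~\ref{obs:annih-3}, this is the same as
\[
  \Lambda(h):=\sum_{v\in V}\sum_{e\in E(v)}\ip{f(\rho_v(e))}{h(e)}=0 .
\]

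\textbf{Step 1: a local, orientation-free formula for $\lambda_v$.} At a vertex $v$ with $E(v)=\{e_1,e_2,e_3\}$, the values $f(e_1),f(e_2),f(e_3)$ are the three distinct nonzero vectors of a $2$-dimensional subspace $W_v\le\Z_2^3$. I will use the cross product on $\Z_2^3$ and put $F_v:=f(e_1)\times f(e_2)$ and $H_v:=h(e_1)\times h(e_2)$. Both are independent of which pair is chosen, because the third vector is the sum of the other two and $x\times x=0$. The Binet--Cauchy identity $\ip{a\times b}{c\times d}=\ip{a}{c}\ip{b}{d}+\ip{a}{d}\ip{b}{c}$, together with $\ip{f(e_i)}{h(e_i)}=0$ and Observation~\ref{obs:annih-vert}, gives
\[
  \ip{F_v}{H_v}=\lambda_v^2=\lambda_v .
\]
This also holds for $v\notin T_h$: there two of the $h(e_i)$ coincide, so $H_v=0$. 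Hence $\Lambda(h)=\sum_{v}\ip{F_v}{H_v}$, and the dependence on $\rho$ has disappeared. Here $F_v$ is the nonzero normal vector of $W_v$.

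\textbf{Step 2: local structure of $h$ on an edge.} Each $h(e)$ lies in the $2$-dimensional space $f(e)^\perp$. Moreover, at an endpoint $u$ of $e$, the quantity $\ip{f(\rho_u(e))}{h(e)}$ does not depend on $\rho_u$, since the two other edges at $u$ carry $f$-values differing by $f(e)$. So $\Lambda$ is a sum of incidence terms $p_u(e)$ that are determined by $f$ and $h$ alone.

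\textbf{Step 3: global cancellation.} The map $h\mapsto\Lambda(h)$ is linear over $\Z_2$: the right-hand side of a linear combination of the equations is linear in the coefficients $h$. So it suffices to prove $\Lambda(h)=0$ for $h$ ranging over a spanning set of the annihilator space. For the spanning set I would take flows of the form $h=\gamma(C)\cdot w$ restricted suitably, and more generally flows supported on circuits $C$ where $h(e)$ is forced in $f(e)^\perp$. On a single circuit $C$ carrying $h$, the vertices of $T_h$ are exactly those where $h$ is also nonzero on the third edge. The formula $\ip{F_v}{H_v}$ then becomes a telescoping sum along $C$: consecutive vertices share the edge value $f(e)\in W_u\cap W_v$, and I expect the contributions to pair up across each edge. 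Each edge contributes to exactly two vertices, which should give a count divisible by $2$.

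The main obstacle is Step 3. The annihilator space need not be spanned by such simple circuit flows, and the three-dimensionality of $\Z_2^3$ must be used essentially, since the statement fails for $\Z_2^4$ (cf.\ Theorem~\ref{thm:planchar}). My first attempt would be to exploit the following fact: in $\Z_2^3$, $f(e)^\perp$ and $W_v$ are both planes, which forces $h(e)$ to be either $F_v$-related or to lie in $W_v$. I would then use this dichotomy to decompose $\Lambda(h)$ into a sum over edges of a symmetric quantity, which is counted twice and therefore vanishes mod~$2$.
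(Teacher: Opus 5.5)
Your Steps~1 and~2 are correct. The Binet--Cauchy computation does give $\lambda_v=\ip{F_v}{H_v}$, and Step~2 is just Observation~\ref{obs:annih-vert}. But these steps only rewrite the quantity that has to vanish. All of the content of the statement sits in Step~3, and there you have a hope rather than an argument. The circuit-flow idea is vacuous. If $h$ is supported on a single circuit, no vertex has all three incident edges in the support, so $T_h=\emptyset$ and $\Lambda(h)=0$ trivially. Nothing in the proposal reduces a general annihilator flow to such pieces, as you yourself note. The ``dichotomy'' for $h(e)$ and the ``symmetric edge quantity counted twice'' are never defined. So, as written, $\Lambda(h)=0$ is not proved.

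The missing idea is purely local and needs neither linearity nor a spanning set: \emph{every $v\in T_h$ has $\lambda_v=1$}. Suppose $\lambda_v=0$. For each $e\in E(v)$ we have:
\begin{itemize}
\item $\ip{h(e)}{f(e)}=0$, because $h$ is an annihilator;
\item $\ip{h(e)}{f(\rho_v(e))}=\lambda_v=0$;
\item hence $\ip{h(e)}{f(\rho_v^{-1}(e))}=0$, since $f(\rho_v^{-1}(e))=f(e)+f(\rho_v(e))$.
\end{itemize}
Thus all three values $h(e)$ lie in $W_v^\perp$. In $\Z_2^3$ this is the one-dimensional space $\{0,F_v\}$, and this is exactly where three dimensions are used. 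For $v\in T_h$ this forces $h(e)=F_v$ for all three $e\in E(v)$. Then $\sum_{e\in E(v)}h(e)=3F_v=F_v\neq0$, contradicting that $h$ is a flow.

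Consequently $\sum_{v\in T_h}\lambda_v\equiv|T_h|\pmod 2$, and $|T_h|$ is even. In the subgraph formed by the edges with $h(e)\neq0$, no vertex has degree~$1$, since a single nonzero value cannot sum to zero. So $T_h$ is precisely the set of odd-degree vertices of that subgraph, and the handshaking lemma finishes the proof. This is the paper's argument; your cross-product reformulation is not needed for it.
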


\begin{proof}
  By Observation~\ref{obs:annih},
  it is enough to show that for every annihilator flow $h$, the right-hand side is
  zero. By Observation~\ref{obs:annih-3}, it is enough to check that
  $\sum_{v \in T_h} \lambda_v = 0.$

  Let $v \in T_h$. Let $\{ e_1, e_2, e_3 \} = E(v)$.
  If $\lambda_v = 0$ then $\ip{h(e_i)}{f(e_j)} = 0$ for all $i, j$. But
  there is only one non-zero value $x$ for which $\ip{x}{f(e_j)} = 0$ holds for all $j$
  (we work in $\Z_2^3$ and $\ip{x}{f(e_j)}$ for all $j$ are three linear equations
  with one linear dependency, so the solution space has dimension one).
  Hence all $h(e_i) = x$ and the sum around vertex $v$ is $3x = x \neq 0$
  which is a contradiction with definition of a flow. Hence $\lambda_v = 1$.

  Notice that $T_h$ are exactly vertices of degree three after deleting edges with
  $h(e) = 0$ and there are no vertices of degree one because $h$ is a flow.
  By hand-shaking lemma, the size of $T_h$ is even and the final sum is zero.
\end{proof}

It seems unlikely that there will be many solutions for a single flow. On the
other hand, it is known that every bridgeless graph has many NZ $\Z_2^3$-flows.
Given the existence of a single NZ $\Z_2^3$-flow (guaranteed by Theorem~\ref{thm:8NZF}) 
it is easy to show that there are many of them:

\begin{observation}\label{obs:many-8NZF-triv}
  If a cubic graph $G$ has a NZ $\Z_2^3$-flow, it has $\Omega(2^{2n/7})$
  NZ $\Z_2^3$-flows.
\end{observation}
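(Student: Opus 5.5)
Starting from one NZ $\Z_2^3$-flow $f$, I will produce many others by adding flows that are nonzero only on small cycles, chosen far enough apart that they do not interfere. Concretely, for a circuit $C$ and a value $x\in\Z_2^3$, the map $f + x\cdot\gamma(C)$ is again a $\Z_2^3$-flow, since it is a sum of flows. It is nowhere-zero exactly when $x \neq f(e)$ for every $e\in C$. Edges outside $C$ keep their original nonzero values.

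The plan is to find a family $C_1,\dots,C_m$ of pairwise edge-disjoint circuits with $m$ linear in $n$. Edge-disjointness alone suffices: each edge lies on at most one $C_j$, so nowhere-zeroness can be checked circuit by circuit. I intend to take short circuits of length at most some constant $L$. For such a circuit the forbidden values $\{0\}\cup\{f(e):e\in C_j\}$ are fewer than $8$ whenever $C_j$ carries at most $6$ distinct values. This always holds for a cycle of length at most $6$, because $f$ takes at most $|C_j|\le 6$ values on it. Independent choices of $x_j$ for each $j$ then give at least $2^m$ distinct flows. Distinctness follows because the circuits are edge-disjoint and $x_j\neq 0$: two different choice vectors differ on some $C_j$ and hence on its edges. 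Including $x_j=0$ as an option is also fine and only helps.

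The main obstacle is guaranteeing $m=\Omega(n)$ with a ratio that yields exponent $2n/7$, since long girth would kill the short-cycle approach. I therefore choose edge-disjoint circuits greedily and do not bound their length. For a circuit $C$ of length $\ell$, the number of good $x$ is at least $8-1-\ell$ if $\ell\le 6$ and possibly $0$ otherwise. That is bad, so I switch the counting: a cycle-space argument should give the $2n/7$ rate more robustly. All flows $f+h$ with $h$ in the cycle space $Z$, which has dimension $n/2+1$, form the coset $f+Z^3$ of size $2^{3(n/2+1)}$. A random element of this coset is nonzero on each edge with probability $7/8$ by uniformity, and I would like to bound the count from below by a product-type argument. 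That seems hard, so I fall back to a greedy procedure. I repeatedly pick a shortest circuit $C$ in the current graph, whose length is $O(\log n)$, record it, and delete a bounded neighbourhood of it. That gives only $n/\log n$ circuits, which is insufficient.

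The cleaner route is therefore the following. Take a spanning tree $T$; its cotree has $n/2+1$ edges, and each fundamental circuit $C_e$ yields the flow $f+x\gamma(C_e)$. To retain an exponent of $2n/7$, I instead select an induced matching-like set $M$ of cotree edges such that the fundamental circuits pairwise interact only along tree paths. Rather than requiring the circuits to be disjoint, I modify values only on $e\in M$ and propagate the change through the tree; I would then need to show that the count is at least $2^{|M|}$ with $|M|\ge 2n/7$. This is the step I expect to be genuinely delicate, and I would first test whether choosing $x_e\in\{0,f(e)+y\}$ with a fixed $y$ keeps every tree edge nonzero.
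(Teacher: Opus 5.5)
\textbf{There is a genuine gap: your proposal does not reach a proof.} Each route is either abandoned or left open.
\begin{itemize}
\item Short edge-disjoint circuits fail for large girth.
\item Long circuits may carry all seven nonzero values, so no nonzero increment is allowed.
\item The random-coset and greedy-shortest-circuit ideas are dropped.
\item The final spanning-tree route leaves the essential step unproved.
\end{itemize}

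That last step would in fact fail as you formulate it. Take a single chosen cotree edge $e$ with $x_e=f(e)+y$. A tree edge $e'$ on the fundamental circuit $C_e$ then gets value $f(e')+f(e)+y$, which is $0$ whenever $f(e')=f(e)+y$. With several overlapping fundamental circuits, the different increments $x_e$ add up on shared tree edges in a way that nothing in an ``induced matching-like'' choice of $M$ controls. You also give no reason why $|M|\ge 2n/7$ should be achievable; the constant $2/7$ never actually arises in your argument.

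\textbf{The missing idea is to perturb by one fixed value $z$ rather than by circuit-dependent values.} With a fixed $z$, overlaps are harmless and disjointness is not needed. The argument, which is the paper's, runs as follows.
\begin{itemize}
\item Let $z$ be the least frequent nonzero value of $f$. By pigeonhole $|f^{-1}(z)|\le m/7$.
\item So $H=G-f^{-1}(z)$ has at least $6m/7=9n/7$ edges, and its cycle space has dimension at least $9n/7-n+1=2n/7+1$.
\item Take any cycle $C$ of $H$ (any even subgraph, not only a circuit). Then $f+z\gamma(C)$ is a flow.
\item On $e\in C$ its value is $f(e)+z\neq 0$, because $f(e)\neq z$ for edges of $H$. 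Off $C$ it equals $f(e)\neq 0$.
\item Distinct $C$ give distinct flows, so there are at least $2^{2n/7+1}$ NZ $\Z_2^3$-flows.
\end{itemize}

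Since $z\gamma(C_1)+z\gamma(C_2)=z\gamma(C_1\symdiff C_2)$, the admissible perturbations form a linear space. That is why the count is a clean power of two with no interactions to control. The only edges that could become zero are those with $f(e)=z$, and these are excluded up front by passing to $H$.
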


\begin{proof}
  Let $f$ be the NZ $\Z_2^3$-flow. Let $z$ be its least used non-zero value.
  The number of edges with $f(e) = z$ is at most $m / 7$. So after removing
  all such edges, at least $6m/7$ edges remain. This subgraph has cycle space
  $\calC$ of dimension at least $9n/7 - n + 1 = 2n/7 + 1$. For any cycle
  $C \in \calC$ define a $\Z_2^3$-flow~$g$ with value~$z$ on~$C$ (and 0 elsewhere).
  Then $f+g$ is a NZ flow and different choices of~$C$ give different flows. 
\end{proof}

A better bound is known due to DeVos, Langhede, Mohar and Šámal
(unpublished as of July 2026). 

\begin{theorem}[DeVos, Langhede, Mohar and Šámal~\cite{dlms}, Theorem~1.9]
  \label{thm:many-8NZF}
  Every 3-connected cubic graph has at least $2^{n/2 - 1}$ NZ $\Z_2^3$-flows.
\end{theorem}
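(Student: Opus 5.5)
The plan is to prove the bound by induction on $n$, using a reduction that removes two vertices and at least doubles the number of NZ $\Z_2^3$-flows. For an edge $e=uv$ of $G$, let $G_e$ be the graph obtained by deleting $e$ and suppressing the two resulting degree-$2$ vertices $u$ and $v$. Write $a=xu\,uy$ for the edge of $G_e$ replacing the path $x,u,y$, and $b$ for the edge replacing the path through $v$. Then $G_e$ is cubic on $n-2$ vertices.

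\emph{Lifting step.} Let $g$ be a NZ $\Z_2^3$-flow on $G_e$ and choose $c\in\Z_2^3$. Define a flow on $G$ as follows:
\begin{itemize}
\item on the two halves $xu$ and $uy$ of $a$, put $g(a)$ and $g(a)+c$;
\item on the two halves of $b$, put $g(b)$ and $g(b)+c$;
\item on $uv$, put $c$;
\item on every other edge, copy $g$.
\end{itemize}
The flow conditions at $u$ and $v$ hold by construction, and all other vertices are unaffected. The new flow is nowhere-zero exactly when $c\notin\{0,g(a),g(b)\}$, which leaves at least $8-3=5\ge 2$ choices of $c$. Distinct pairs $(g,c)$ give distinct flows on $G$, since $c$ is read off $uv$ and $g$ is recovered by restriction. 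Hence $\#\mathcal F_3(G)\ge 5\,\#\mathcal F_3(G_e)\ge 2\,\#\mathcal F_3(G_e)$. So if $G_e$ satisfies the bound $2^{(n-2)/2-1}$, then $G$ satisfies $2^{n/2-1}$.

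\emph{Base case.} I would take $K_4$ as the base, and possibly also $K_{3,3}$ or the theta graph if the reduction lands there. For $K_4$ we need at least $2$ NZ $\Z_2^3$-flows. This holds because $K_4$ is 3-edge-colorable, so it has a NZ $\Z_2^2\subseteq\Z_2^3$-flow, and composing with the $|GL_3(\Z_2)|=168$ automorphisms of $\Z_2^3$ already gives many distinct flows. If the induction is allowed to pass through cubic multigraphs, the base could instead be the theta graph on $2$ vertices. Its NZ $\Z_2^3$-flows are the triples of nonzero vectors summing to $0$, and there are $7\cdot 6=42\ge 2^{0}$ of them.

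\emph{Main obstacle: keeping the class closed.} The crux is to choose $e$ so that $G_e$ is again $3$-connected, so that the induction hypothesis applies. I would invoke the classical result going back to Tutte and Barnette--Grünbaum: every $3$-connected cubic graph other than $K_4$ has an edge $e$ for which $G_e$ is $3$-connected, possibly after allowing parallel edges. To avoid fighting simplicity, I would run the whole induction in the class of $3$-edge-connected cubic multigraphs. Loops cannot occur there, since they would force a $1$-cut, and the lifting step never uses simplicity. In that class the only terminal graph is the theta graph, for which $n/2-1=0$ and the bound holds trivially. What remains is to check that the removable-edge theorem holds in this multigraph setting. The direction to verify is that a $2$-edge-cut in $G_e$ lifts to a cut of size at most $3$ in $G$ that separates $e$'s ends in a nontrivial way. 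One then argues that not every edge can fail, by picking $e$ inside a minimal side of a nontrivial $3$-cut, or arbitrarily if $G$ is cyclically $4$-edge-connected. If this step resists, a fallback is to prove the weaker statement that $G_e$ is merely bridgeless. That suffices for the lifting inequality together with Theorem~\ref{thm:8NZF}, but bridgelessness is not closed under the reduction in general, which is why the $3$-connectivity hypothesis is needed.
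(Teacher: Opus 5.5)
\textbf{Preliminary remark.} The paper does not prove this statement; it quotes it from DeVos, Langhede, Mohar and Šámal. Your argument therefore has to stand on its own, and it does not.

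\textbf{The lifting step fails.} The function you build is not a flow on $G$. In $G_e$ the edge $a$ carried $g(a)$ at its endpoint $y$. In $G$ you put $g(a)+c$ on the half $uy$, so the sum at $y$ changes by $c$. Likewise, the sum at the far end $z$ of the second half of $b$ changes by $c$. The claim that ``all other vertices are unaffected'' is false.

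The construction is correct only when $y=z$, i.e.\ when $e$ lies in a triangle $uvy$. In that case $G_e$ is just $G$ with the triangle contracted, and you recover the classical identity $F_G(8)=5F_{G_e}(8)$, which explains why the factor $5$ matches $K_4$. But a 3-connected cubic graph need not contain any triangle; $K_{3,3}$ and the Petersen graph are examples. So as written, the induction only covers graphs that reduce to $K_4$ (or the theta graph) by repeatedly contracting triangles.

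\textbf{Why the natural repair does not give a uniform factor.} In general, the value $c$ placed on $uv$ must be returned along an entire $u$--$v$ path $P$ of $G-e$, i.e.\ $f=g+c\,\gamma(P+e)$. This $f$ is nowhere-zero if and only if $c\notin\{0\}\cup g(P)$.

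\begin{itemize}
\item Since $P$ may be long, $g(P)$ can contain all seven nonzero elements of $\Z_2^3$. For a fixed $P$ there may then be no admissible $c$ at all.
\item If you let $P$ depend on $g$, the map $(g,c)\mapsto f$ is no longer injective, because $g=f+f(e)\,\gamma(P+e)$ can be recovered only once $P$ is known.
\end{itemize}

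Equivalently, by deletion--contraction $F_G(8)=F_{G/e}(8)-F_{G_e}(8)$. So the inequality $\#\mathcal F_3(G)\ge 2\,\#\mathcal F_3(G_e)$ that you need is equivalent to $F_{G/e}(8)\ge 3F_{G_e}(8)$. This is a global statement about the flow polynomial, not something a local modification near $e$ can deliver.

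\textbf{What is missing.} The removable-edge theorem and the base cases are the easy part. The missing ingredient is a genuinely global mechanism that guarantees many nowhere-zero extensions, for instance a group-connectivity type statement for 3-edge-connected graphs. Without it, the proposal does not establish the bound.
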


\begin{corollary}\label{cor:8CyDC}
  Every 3-connected cubic graph has at least $2^{n/2 + 2}$ 8-CyDCs.
\end{corollary}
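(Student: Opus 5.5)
The plan is to count pairs in $\calN_3$ and transfer the count to $\CC_8$ through the bijection $\mu_3$ of Observation~\ref{obs:cydc-to-sol}. Concretely,
\[
|\CC_8| = |\calN_3| = \sum_{f\in\mathcal F_3} \#\{\text{solutions of \eqref{eq:cond} for } f\},
\]
so it suffices to bound the number of flows from below and the number of solutions per flow from below.

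For the flows, Theorem~\ref{thm:many-8NZF} gives $|\mathcal F_3|\ge 2^{n/2-1}$, since $G$ is 3-connected.

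For a fixed $f\in\mathcal F_3$, the steps are as follows.
\begin{enumerate}
  \item Observation~\ref{obs:align} shows that \eqref{eq:cond} is solvable.
  \item Corollary~\ref{cor:num-sols} with $k=3$ shows that the solution space has dimension $d + n(3-3)/2 = d$, where $d$ is the dimension of the space of annihilator flows of $f$. So $f$ contributes exactly $2^d$ solutions.
  \item It remains to show $d\ge 3$, so that each flow contributes at least $8$ solutions. Then $|\CC_8|\ge 8\cdot 2^{n/2-1}=2^{n/2+2}$, as claimed.
\end{enumerate}

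The main point is exhibiting a $3$-dimensional space of annihilator flows. One could argue this from the solution set directly: by Corollary~\ref{col:cydc-to-cidc}, replacing $t$ by $t+\Delta$ for any constant $\Delta\in\Z_2^3$ keeps $\eps$ and still solves \eqref{eq:cond}. This already gives $2^3$ distinct solutions for each solvable $f$, since distinct $t$ give distinct solutions. So the solution space has dimension at least $3$, and hence $d\ge 3$.

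As a cross-check I would also verify $d\ge3$ intrinsically, by building annihilator flows of the form $h(e)=L(f(e))$ for a linear map $L$ with $\ip{x}{L(x)}=0$ for all $x$, i.e.\ an alternating matrix over $\Z_2$. Such $h$ is a flow by linearity, and the $3\times3$ alternating matrices form a $3$-dimensional space. These $h$ are pairwise distinct as long as the values of $f$ span $\Z_2^3$. If they do not span, $f$ is effectively a $\Z_2^2$-flow, and then the translation argument above suffices anyway.

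I expect no real obstacle. The only care needed is to check that the translation argument does produce $8$ distinct elements of $\calN_3$, which holds because $\mu_3$ is injective and the $t$-components differ.
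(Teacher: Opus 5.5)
Your proposal is correct and is essentially the paper's proof. The paper takes $2^{n/2-1}$ flows from Theorem~\ref{thm:many-8NZF}, notes that each has a solution of~\eqref{eq:cond} by Observation~\ref{obs:align}, and uses the eight constant shifts $t\mapsto t+x$, $x\in\Z_2^3$, to get eight distinct elements of $\calN_3$ per flow, hence eight distinct 8-CyDCs via $\mu_3$. Your detour through Corollary~\ref{cor:num-sols} and $d\ge 3$, including the alternating-matrix cross-check, is valid but unnecessary, because the shift argument alone already gives the factor $8$.
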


\begin{proof}
  Every flow gives at least one CyDC. Moreover for $(f, (t, \eps)) \in \calN_3$
  and every $x \in \Z_2^3$ the tuple $(f, (t + x, \eps))$ (i.e., shifting
  all $t_v$ by $x$) is also in $\calN_3$. (Corresponding CyDCs differ only by renaming
  cycles by $x$.) So every flow gives at least 8 CyDCs.
\end{proof}

The question remains how many 8-CyDCs can correspond to the same CiDC. This can
be seen as a bipartite graph with partition $A = \CC_8$,
partition $B = \CC_*$
and $$E = \{ (\calC, \calC') \in \CC_8 \times \CC_* :
  \sigma(\calC) = \calC' \}.$$
The degrees of vertices in $A$ are always one.
We also have a good lower bound on $|A|$. So it remains to upper-bound
degrees in $B$. Note that some vertices in $B$ might have degree zero as they
may not be colorable by 8 colors so they are not counted by the following bounds.

\begin{observation}\label{obs:cidc-color}
  A CiDC $\calC'$ consisting of $c$ circuits can be obtained from $\mathcal O(6^c)$
  8-CyDCs.
\end{observation}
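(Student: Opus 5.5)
The plan is to count the 8-CyDCs $\calC$ with $\sigma(\calC)=\calC'$ directly. Such a $\calC$ is determined by a labeling of the circuits of $\calC'$ by elements of $\Z_2^3$. Each cycle $C_i$ of $\calC$ is the disjoint union of the circuits that received label $i$. Conversely, a labeling yields a valid 8-CyDC mapping to $\calC'$ exactly when the circuits with the same label are edge-disjoint and the decomposition of each $C_i$ into circuits returns the original circuits. In a cubic graph two edge-disjoint circuits sharing a vertex would give that vertex degree $4$, which is impossible. So edge-disjoint circuits are in fact vertex-disjoint, and the decomposition is recovered uniquely (Definition~\ref{def:sigma}).

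It follows that the number of 8-CyDCs over $\calC'$ is at most the number of proper colorings, with the 8 colors of $\Z_2^3$, of the conflict graph $H$. The vertices of $H$ are the $c$ circuits of $\calC'$, and two circuits are adjacent in $H$ when they share an edge. (Sharing a vertex already forces sharing an edge, so this is the same as adjacency by common vertex.) The trivial bound $8^c$ is too weak, so the point is to exploit the structure of the constraints to bring the base down to $6$.

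To get $6^c$, I would use the fact that any circuit $C$ of length at least $3$ shares an edge with at least two other circuits. Indeed, at each vertex $v$ of $C$ the three circuits through $v$ are pairwise edge-adjacent, so $C$ has at least two neighbours in $H$ there, and these two neighbours are adjacent to each other. Hence every vertex of $H$ lies in a triangle of $H$; a vertex of degree $\le 1$ is impossible.

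The counting step then goes as follows. Order the circuits by a search order, so that each new circuit $C$ after the first shares an edge, and in fact a vertex $v$, with an already-colored circuit $D$. Then I would like to argue that the third circuit through $v$ is also already colored, or else the pair of circuits at $v$ can be processed together. Coloring greedily, a circuit with one colored neighbour has $7$ choices, and a circuit with two mutually adjacent colored neighbours has $6$ choices. To guarantee the latter, process the circuits vertex by vertex: grow a connected set of visited vertices in $G$, and whenever a vertex $v$ becomes visited, color the not-yet-colored circuits through $v$.

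The main obstacle is the start of this process and vertices where two new circuits appear simultaneously. There, one circuit has $7$ options and the other $6$, giving $42\le 6^2\cdot\frac{7}{6}$. At the first vertex we get $8\cdot7\cdot6$ choices, contributing only a constant factor. I would try to show that the events where two new circuits appear at once, with the extra factor $7/6$, can be charged differently. Otherwise I would settle for a bound of the form $O(7^c)$ or $O(\sqrt{42}^{\,c})$, since any bound $C_0^c$ suffices for Theorem~\ref{thm:snarks}.

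Everything else is routine. The connectedness of $G$ ensures every circuit is eventually reached. The constant factor from the first vertex is absorbed into the $\mathcal O$. The multiplicative count, a product of at most $6$ choices per newly colored circuit beyond a constant-size start, then yields the claimed $\mathcal O(6^c)$.
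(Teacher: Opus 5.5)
\textbf{Gap: the two-new-circuits case you leave open.} Your reduction to proper $\Z_2^3$-colorings of the conflict graph $H$ is the paper's approach. So is your processing order: grow a connected set of visited vertices of $G$ and color the uncolored circuits through each new vertex (the paper uses the reverse of a leaf-deletion order of a spanning tree). What is missing is the step you flag as the main obstacle, the case where two new circuits appear at once at a vertex other than the first. That case never occurs.

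When $v$ is added, it has an already visited neighbour $u$. The two circuits of $\calC'$ through the edge $uv$ both pass through $u$, so both were colored when $u$ was visited. They have distinct colors because they share $uv$. Hence at most one circuit through $v$ is new, namely the one avoiding $uv$. It shares an edge at $v$ with each of the other two, so at most $6$ colors remain. This gives $8\cdot 7\cdot 6\cdot 6^{c-3}$ directly, with no charging argument.

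As written, your last paragraph asserts ``at most $6$ choices per newly colored circuit'' while the previous paragraph concedes a possible factor $7$ at some vertices. So the bound $\mathcal O(6^c)$ is not actually established.

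\textbf{Your fallback does not suffice.} With a bound $B^c$ and $c\le 3n/g$, the exponent in the proof of Theorem~\ref{thm:snarks} is positive only when $g>6\log_2 B$. For $B=6$ the threshold is about $15.51$, so girth $16$ works. For $B=\sqrt{42}$ it is $3\log_2 42\approx 16.18$, and for $B=7$ it is about $16.84$. So neither $\mathcal O(7^c)$ nor $\mathcal O(\sqrt{42}^{\,c})$ yields the theorem at girth $16$. Any bound of the form $C_0^c$ still gives exponentially many CiDCs for sufficiently large girth, but not at the stated threshold.
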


\begin{proof}
We are interested in the number of proper 8-colorings of circuits of $\calC'$
where two circuits cannot have the same color if they share an edge.
The CiDC $\calC'$ gives a strong embedding of $G$ on some surface
such that the circuits are its faces (take $G$ and glue a disk around every circuit in $\calC'$).
Taking dual of this embedding we obtain a (multi)graph $D$ whose vertices correspond to the 
circuits of $\calC'$ and vertices are connected by an edge if and only if
the two circuits share an edge.

We want to count 8-colorings of $D$. Because $G$ is cubic, $D$ is a triangulation.
Fix a spanning tree $T$ of $G$ and root it in a vertex $v_1$. Repeatedly remove
a leaf of $T$ until only $v_1$ remains. Let $v_n, \dots, v_2$ be the order in which
the leaves were removed.

Now we color vertices of $D$ -- faces of $G$.
Start with the three faces around $v_1$. This gives $8\cdot7\cdot6$ options.
Then for $i = 2,\dots,n$ we look at the three (pairwise adjacent) 
faces of $G$ around $v_i$.
Note that $v_i$ is connected by an edge of~$T$ to some $v_j$ for $j < i$ and hence it
shares at least two faces with it and these faces share an edge $v_iv_j$ so
they have different colors. So there is at most one face to color and there
are at most 6 colors available. So the total bound is $8\cdot7\cdot6 \cdot 6^{c-3}
= \mathcal O(6^c)$ because each face is colored at most once.
\end{proof}

\begin{proof}[Proof of Theorem~\ref{thm:snarks}]
  To use Observation~\ref{obs:cidc-color}, we need to upper-bound the number
of circuits in a CiDC. Using the girth $g$ of $G$ we obtain 
an easy bound $c \leq 3n / g$. Then, using Corollary~\ref{cor:8CyDC} we get
\begin{align*}
  |\CC_*| \geq |\CC_8| / \mathcal O(6^{3n/g})
    &= \Omega(2^{n/2 - (3n/g) \cdot \log_2 6}). 
\end{align*}
As $g > 6 \log_2 6 \approx 15.51$, the exponent $n/2 - (3n/g) \cdot \log_2 6$ is positive. 
So, for graphs with girth at least 16, the exponent is $cn$ for some $c > 0$.
\end{proof}

Replacing Theorem~\ref{thm:many-8NZF} (which is not peer reviewed yet) with
Observation~\ref{obs:many-8NZF-triv} would weaken Theorem~\ref{thm:snarks}
to require girth $g > 10.5 \log_2 6 \approx 27.14$.

\subsection{Planarity characterization}
\label{sec:planchar} 

\begin{theorem}[Mac Lane~\cite{maclane}]\label{thm:maclane}
  A graph $G$ is planar if and only if it has basis of cycle space such that
  each edge is in at most two of its elements.
\end{theorem}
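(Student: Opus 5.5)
Theorem~\ref{thm:maclane} is a classical result that the paper only cites, so I would follow the standard route: prove the easy direction from a planar embedding, and prove the converse via Kuratowski's theorem by showing that having such a basis (a \emph{2-basis}) is preserved under taking minors. Throughout, a cycle means an even subgraph, as in Section~\ref{sec:intro}.

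\emph{Planar implies a 2-basis.} I may assume $G$ is connected, since otherwise I treat components separately and take the union of their bases. Fix a plane embedding and take the boundary cycles (mod~2) of all faces except the outer one. Every edge lies on the boundary of at most two faces, and an edge lying twice on the same face (a bridge) cancels mod~2. By Euler's formula there are $m-n+1$ inner faces, which equals the dimension of the cycle space. The inner face boundaries are independent: in any nonempty set of inner faces, take an edge separating a face in the set from a face outside it (possibly the outer face); that edge survives in the sum. Hence they form a basis, and each edge lies in at most two of its elements.

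\emph{2-basis is minor-closed.} Let $\mathcal B$ be a 2-basis of $G$ and $e$ an edge.
\begin{itemize}
\item Deletion, when $e$ is in no element of $\mathcal B$: then $e$ lies in no cycle, so it is a bridge. $\mathcal B$ stays a 2-basis of $G-e$, whose cycle space is unchanged.
\item Deletion, when $e$ is in exactly one element $B$: drop $B$.
\item Deletion, when $e$ is in two elements $B_1,B_2$: replace them by $B_1+B_2$. This is a cycle avoiding $e$, and each edge still lies in at most two elements.
\end{itemize}
In the last two cases the new family is independent, has size $\dim-1$, and lies in the cycle space of $G-e$, which has dimension $\dim-1$; so it is a basis. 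For contraction of $e$, map each cycle $C$ to $C\setminus\{e\}$. This map is a surjective linear map onto the cycle space of $G/e$, and its kernel is spanned by $\{e\}$ if $e$ is a loop and is trivial otherwise. The images of $\mathcal B$ therefore span, and after removing at most one dependent element (in the loop case) they form a basis with the 2-property. Isolated vertices play no role.

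\emph{$K_5$ and $K_{3,3}$ have no 2-basis.} Given a 2-basis $B_1,\dots,B_d$, add $B_0=B_1+\dots+B_d$. This sum is nonzero by independence, and it contains exactly the edges covered an odd number of times, i.e.\ exactly once. So every edge lies in at most two of the $d+1$ nonempty cycles $B_0,\dots,B_d$. For $K_{3,3}$ we have $d=4$, and every nonempty cycle has at least $4$ edges, so the total coverage is at least $5\cdot 4=20>2\cdot 9$, a contradiction. For $K_5$ we have $d=6$ and cycles of length at least $3$, giving $7\cdot 3=21>2\cdot 10$, again a contradiction. By Kuratowski's (Wagner's) theorem, a graph with a 2-basis has no $K_5$ or $K_{3,3}$ minor and is therefore planar.

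\emph{Main obstacle.} The delicate step is the contraction case: checking that the induced map on cycle spaces is onto with the stated kernel, including the cases where $e$ is a loop or lies in parallel edges. I would handle this by working throughout with multigraphs and cycle spaces over $\Z_2$.
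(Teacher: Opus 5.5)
Your proof is correct. The paper gives no proof of Theorem~\ref{thm:maclane}: it only cites Mac Lane. In fact the paper uses only the easy direction, in Observation~\ref{obs:plan}, to obtain a basis of the cycle space covering each edge at most twice, and that is exactly your face-boundary argument. The nonplanar half of Theorem~\ref{thm:planchar} does not go through Mac Lane at all; it applies Kuratowski's theorem directly in Observation~\ref{obs:plus2nonsolv}.

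Your converse is the standard textbook route: show the 2-basis property survives the needed graph operations, then adjoin $B_0=B_1+\dots+B_d$ and count, using $7\cdot 3>2\cdot 10$ for $K_5$ and $5\cdot 4>2\cdot 9$ for $K_{3,3}$. The individual steps check out:
\begin{itemize}
\item Deleting a non-bridge lowers the dimension of the cycle space by exactly one.
\item Replacing $B_1,B_2$ by $B_1+B_2$ never increases the number of basis elements containing a given edge.
\item The map $C\mapsto C\setminus\{e\}$ is onto the cycle space of $G/e$. Given an even subgraph $D$ of $G/e$, either $D$ or $D\cup\{e\}$ is even in $G$. Its kernel is trivial for a non-loop $e$ and equals $\{\emptyset,\{e\}\}$ for a loop.
\end{itemize}

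You can make the step you flagged as the main obstacle trivial. Since you invoke Kuratowski anyway, use its subdivision form, as the paper does, rather than Wagner's minor form. Closure under edge deletion then gives a 2-basis of the subdivided $K_5$ or $K_{3,3}$ sitting in $G$. The only contractions you need are suppressions of degree-2 vertices of that subdivision. Each such edge joins two distinct vertices, so it is never a loop, and there your map is an isomorphism that preserves each edge's coverage. The loop case and general multigraph bookkeeping never arise.

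One small gap, routine to fill: the independence of the inner face boundaries needs one more sentence. The dual of a connected plane graph is connected, so some edge really does separate a face of the chosen set from a face outside it.
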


For a planar 2-connected graph, one CyDC is given by face boundaries, in fact this 
was an inspiration for Conjecture~\ref{conj:CDC} in the first place. Next, we show that we 
can actually get a CyDC by our construction for any NZ $\Z_2^k$-flow. 

\begin{observation}\label{obs:plan}
  Let $G$ be a planar 2-connected cubic graph. Then the system (\ref{eq:cond})
  is solvable for every NZ $\Z_2^k$-flow.
\end{observation}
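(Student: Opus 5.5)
The plan is to avoid the linear algebra of annihilator flows and build a CyDC directly from the planar embedding. Observation~\ref{obs:cydc-to-sol} then turns that CyDC back into a solution of~\eqref{eq:cond} for the given flow~$f$. The key point is that on a plane graph every $\Z_2^k$-flow is the ``coboundary'' of a face labeling.

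\emph{Step 1 (face potential).} Fix a plane embedding of $G$. Since $G$ is 2-connected, every face is bounded by a circuit, and every edge lies on exactly two distinct faces. The face boundaries span the cycle space of $G$; this is the standard fact, or the easy direction of Theorem~\ref{thm:maclane}.

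Write $f=(f_1,\dots,f_k)$ coordinatewise. Each $f_j\colon E\to\Z_2$ is an element of the cycle space. Hence there is a set $\Phi_j$ of faces whose boundaries sum to $f_j$. Define $\varphi(F)\in\Z_2^k$ by $\varphi(F)_j=1$ iff $F\in\Phi_j$.

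Then for every edge $e$ lying on faces $F_1\ne F_2$ we get $f(e)=\varphi(F_1)+\varphi(F_2)$. Because $f$ is nowhere-zero, adjacent faces receive different labels. So $\varphi$ is a proper face coloring with colors in $\Z_2^k$.

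\emph{Step 2 (the CyDC).} For $i\in\Z_2^k$, let $C_i$ be the union of the boundaries of the faces $F$ with $\varphi(F)=i$. Faces with equal labels share no edge, so $C_i$ equals the symmetric difference of these boundaries and is therefore a cycle. Each edge lies in exactly two faces, which carry different labels, so it belongs to exactly two of the $C_i$. Thus $(C_i)_{i\in\Z_2^k}$ is a $2^k$-CyDC.

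\emph{Step 3 (back to the system).} By Observation~\ref{obs:cydc-to-sol} there is $(f',s)\in\calN_k$ with $\mu_k(f',s)=(C_i)_i$. The proof of that observation shows $f'=\sum_i i\cdot\gamma(C_i)$. By~\eqref{eq:fsum}, $f'(e)$ is the sum of the two labels on $e$, namely $\varphi(F_1)+\varphi(F_2)=f(e)$. Hence $f'=f$, and $s$ is a solution of~\eqref{eq:cond} for $f$.

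The main obstacle is Step~1, namely justifying that $f$ lifts to a face potential. This is exactly where planarity enters: it is the duality between the cycle space of $G$ and the cut space of the dual. Everything after that is bookkeeping with the earlier observations.
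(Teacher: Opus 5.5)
Your proposal is correct and takes essentially the same approach as the paper. The paper uses a Mac Lane basis (each edge in at most two members) plus the leftover cycle $C_0$, which in the planar case is exactly the set of face boundaries you use. It writes $f$ as a $\Z_2^k$-combination of these cycles to get a proper coloring and then invokes bijectivity of $\mu_k$; your explicit check that the recovered flow equals $f$ spells out a step the paper only asserts.
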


\begin{proof}
  Let $(C_i)_{i=1}^l \subseteq \Z_2^E$ be the basis of cycle space of $G$ given by
  Theorem~\ref{thm:maclane}.
  As $G$ is bridgeless, every edge is covered by some cycle, thus it is covered once or twice by the basis. Note that the edges covered once
  are a cycle (as it is the symmetric difference $C_1 \oplus \cdots \oplus C_l$) and denote it by~$C_0$.
  Let $f: E \to \Z_2^k$ be a NZ flow. 
  Using $k$ times the fact that $(C_i)_{i=1}^l$ is a basis, we get 
  $\alpha_i \in \Z_2^k$ such that $f = \sum_{i=1}^l \alpha_i \gamma(C_i)$.
  Letting $\alpha_0 = 0$, we can extend the summation from $i=0$ to $l$.
  We claim that for every $i \neq j$ if $C_i \cap C_j \neq \emptyset$ then $\alpha_i \neq \alpha_j$.  
  (If there is $e \in C_i \cap C_j$ then $\alpha_i + \alpha_j = f(e) \neq 0$.)
  So $\alpha$ is a proper coloring of cycles with $\Z_2^k$ colors
  and gives a $2^k$-CyDC $\calC$ with $a$-th cycle being the union of
  all $C_i$ with $\alpha_i = a$. This CyDC is generated by flow $f$.
  Bijectivity of $\mu_k$ implies $\calC = \mu_k(f,s)$ for some $s$, the claimed solution of \eqref{eq:cond}.
\end{proof}

Experimental data suggest that for non-planar graphs and $k \geq 4$ there
always exists a flow for which the system is not solvable.
We are not able to prove that, so we prove a weaker version -- there exists
a non-solvable flow for $k = 2 + z$ where $z$ is the number of dimensions
needed for NZ flow (so 2 for 3-edge-colorable graphs and 3 otherwise).

\begin{figure}[h]
  \centering
  \begin{tikzpicture}[scale=1.15,
    vtx/.style ={circle,fill=black,inner sep=0pt,minimum size=5pt},
    edg/.style ={black!70,line width=.9pt},
    every node/.style={font=\small}]

    \definecolor{colA} {RGB}{215, 45, 35}   
    \definecolor{colB} {RGB}{150, 20, 25}   
    \definecolor{colAp}{RGB}{240,150, 20}   
    \definecolor{colBp}{RGB}{ 25,140, 60}   

    \newcommand{\fourcycle}[7]{%
      \draw[#1,line width=1.7pt,opacity=.9,rounded corners=7pt]
        ($(#3)!#2!(#7)$) -- ($(#4)!#2!(#7)$) --
        ($(#5)!#2!(#7)$) -- ($(#6)!#2!(#7)$) -- cycle;}

    \coordinate (a) at (0   , 0   );   
    \coordinate (c) at (6.9 , 0.1 );   
    \coordinate (t) at (6.3 ,-3.2 );   
    \coordinate (x) at (3   , 2.1 );   
    \coordinate (y) at (3.1 , 0.1 );   
    \coordinate (z) at (2.7 ,-2.3 );   

    \foreach \u in {a,c,t}
      \foreach \v in {x,y,z}
        \draw[edg] (\u) -- (\v);

    \coordinate (mA)  at (barycentric cs:a=1,x=1,c=1,y=1);
    \coordinate (mB)  at (barycentric cs:a=1,y=1,c=1,z=1);
    \coordinate (mAp) at (barycentric cs:a=1,y=1,z=1);
    \coordinate (mBp) at (barycentric cs:a=1,x=1,c=.8,y=1);

    \fourcycle{colA} {3mm}{a}{x}{c}{y}{mA}    
    \fourcycle{colB} {5mm}{a}{y}{c}{z}{mB}    
    \fourcycle{colBp}{5mm}{a}{x}{t}{y}{mBp}   
    \fourcycle{colAp}{3mm}{a}{y}{t}{z}{mAp}   

    \node[vtx,label={[label distance=1pt]left :$a$}] at (a) {};
    \node[vtx] at (c) {};
    \node[vtx,label={[label distance=1pt]right:$t$}] at (t) {};
    \node[vtx] at (x) {};
    \node[vtx] at (y) {};
    \node[vtx] at (z) {};

    \node[colA ] at (5.30, 0.45) {$A$};
    \node[colB ] at (5.35,-0.30) {$B$};
    \node[colBp] at (6.15,-2.45) {$B'$};
    \node[colAp] at (4.30,-2.30) {$A'$};
  \end{tikzpicture}
  \caption{Subdivision of $K_{3,3}$ with circuits $A$, $B$, $A'$ and $B'$}
  \label{fig:k33}
\end{figure}

The idea behind the following construction is that the subspace of
the cycle space generated by circuits of a CiDC cannot contain both
$A$ and $B$ from Figure~\ref{fig:k33}.

\begin{observation}\label{obs:plus2nonsolv}
  Let $G$ be a non-planar cubic graph, and let $f': E \to \Z_2^k$ be a NZ flow.
  Then there exists a NZ flow $f: E \to \Z_2^{k+2}$ such that the system~\eqref{eq:cond}
  for $f$ is not solvable.
\end{observation}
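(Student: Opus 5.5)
The plan is to encode the two circuits $A$ and $B$ of Figure~\ref{fig:k33} into two extra coordinates of the flow, and then exhibit an annihilator flow $h$ violating the criterion of Observation~\ref{obs:annih-3}.

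\emph{Step 1: find the $K_{3,3}$.} Since $G$ is cubic and non-planar, Kuratowski's theorem gives a subdivision $H$ of $K_5$ or $K_{3,3}$ in $G$. A subdivision of $K_5$ needs branch vertices of degree $4$, so $H$ is a subdivision of $K_{3,3}$. Label it as in Figure~\ref{fig:k33}: one side $\{a,c,t\}$, the other side $\{x,y,z\}$, and circuits
\begin{itemize}
\item $A = axcy$ and $B = aycz$, which share the path $a$--$y$--$c$;
\item $A' = aytz$ and $B' = axty$.
\end{itemize}

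\emph{Step 2: define the flow.} Put
\[
f(e) = \bigl(f'(e), \gamma(A)(e), \gamma(B)(e)\bigr) \in \Z_2^{k+2}.
\]
This is a flow because each coordinate is a flow. It is nowhere-zero because its first block $f'$ already is.

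\emph{Step 3: reduce to a parity computation.} By Observation~\ref{obs:annih} and Observation~\ref{obs:annih-3}, it suffices to find an annihilator flow $h$ of $f$ with $\sum_{v\in T_h}\lambda_v = 1$.

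\emph{Step 4: choose $h$.} I would take $h$ vanishing in the first $k$ coordinates, namely
\[
h(e) = \bigl(0, \gamma(X)(e), \gamma(Y)(e)\bigr)
\]
for cycles $X, Y$ of $H$ built from $A, B, A', B'$ and their symmetric differences. Then $h$ is automatically a flow, and the annihilator condition becomes the edgewise identity
\[
\gamma(A)\gamma(X) = \gamma(B)\gamma(Y).
\]
In words, $A\cap X = B\cap Y$ as edge sets. A natural first attempt is $X = A'$ and $Y = B'$, or $X = B \symdiff B'$ and $Y = A \symdiff A'$, chosen so that the two intersections both equal a common subpath such as the edges at $a$ and $y$. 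I would check the candidates directly on the six branch vertices and the subdivided paths.

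\emph{Step 5: compute the vertex parities.} With $h$ fixed, $T_h$ consists of the vertices where all three edges carry nonzero $h$. These can only be branch vertices of $H$, since subdivision vertices of paths in $H$ and vertices outside $H$ have an edge outside $X \cup Y$. At each branch vertex $v \in T_h$, evaluate $\lambda_v = \ip{f(\rho_v(e))}{h(e)}$; by Observation~\ref{obs:annih-vert} any $\rho_v$ may be used. Only the last two coordinates of $f$ matter, so this is a finite computation on $K_{3,3}$ depending only on membership of edges in $A, B, X, Y$. The goal is to get an odd number of $\lambda_v = 1$.

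\emph{Main obstacle.} The hard part is Step 4 together with Step 5: finding $X, Y$ that satisfy the edgewise annihilator identity and also give odd parity. The conceptual reason such a pair should exist is the one stated before the observation. A solution yields a CyDC, hence an embedding whose face cycles span a subspace containing both $A$ and $B$ (project $f = \sum_i i\,\gamma(C_i)$ onto the last two coordinates). In a $K_{3,3}$ this contradicts the $\Z_2$ intersection form on the surface, since two such cycles must meet an odd number of times transversally.

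If the direct choice of $h$ fails, I would switch to this topological route. Show that every cycle in the span of the faces is null-homologous over $\Z_2$. Then show that in any embedding of a $K_{3,3}$-subdivision, $A$ and $B$ (or the pair $A, A'$ versus $B, B'$) cannot all be null-homologous, using the odd crossing parity forced at the vertices $a$ and $c$. That forcing, independent of the local rotations, is where I expect the real difficulty.
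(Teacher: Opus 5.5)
Your construction is the paper's: the same $f=(f',\gamma(A),\gamma(B))$, and your first candidate $X=A'$, $Y=B'$ is exactly the paper's annihilator $h=(0,\gamma(A'),\gamma(B'))$. What is missing is the finite check you defer as the ``main obstacle''. That check \emph{is} the proof, so you should carry it out rather than plan a topological fallback. It is short:
\begin{itemize}
\item With $A=axcy$, $B=aycz$, $A'=aytz$, $B'=axty$, both $A\cap A'$ and $B\cap B'$ equal the $a$--$y$ path. So $\gamma(A)\gamma(A')=\gamma(B)\gamma(B')$ and $h$ is an annihilator flow.
\item The support $A'\cup B'$ contains all three paths at $a$ and at $t$ but misses the three paths at $c$. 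Hence $x,y,z,c$, all subdivision vertices and all vertices outside the subdivision are excluded, and $T_h=\{a,t\}$.
\item No edge at $t$ lies in $A\cup B$, and $h$ vanishes on the first $k$ coordinates, so $\lambda_t=0$.
\item At $a$, the last two coordinates of $f$ on the paths towards $x,y,z$ are $(1,0),(1,1),(0,1)$, and those of $h$ are $(0,1),(1,1),(1,0)$. Take $e$ on the path towards $y$. Then $\rho_a(e)$ lies towards $x$ or towards $z$, and in either case $\lambda_a=1$.
\end{itemize}
Hence $\sum_{v\in T_h}\lambda_v=1$, and Observations~\ref{obs:annih} and~\ref{obs:annih-3} finish the proof.

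The pairing matters. Your alternative $X=B\symdiff B'$, $Y=A\symdiff A'$ is \emph{not} an annihilator: $A\cap X$ is the union of the $a$--$x$ and $y$--$c$ paths, while $B\cap Y$ is the union of the $a$--$z$ and $y$--$c$ paths. Your homological intuition is sound. In the surface given by a CyDC, both $A$ and $B$ would have to be $\Z_2$-null-homologous, which a $K_{3,3}$-subdivision forbids. This is the motivation the paper states before the observation, but it is not needed once $h$ is verified directly.
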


\begin{proof}
  By Kuratowski theorem~\cite{kuratowski}, $G$ contains subdivision of either
  $K_{3,3}$ or $K_5$ and it cannot contain subdivision of $K_5$ because
  $K_5$ has vertices of degree 4 and $G$ is cubic. Take the subdivision of $K_{3,3}$ together with
  circuits denoted in Figure~\ref{fig:k33}.

  Define $f: E \to \Z_2^{k+2}$ to be $f'$ on the first $k$ coordinates, $\gamma(A)$ on coordinate
  $k+1$ and $\gamma(B)$ on coordinate $k+2$. Define $h: E \to \Z_2^{k+2}$ to be zero
  on the first $k$ coordinates, $\gamma(A')$ on coordinate $k+1$, and $\gamma(B')$
  on coordinate $k+2$. Observe that $h$ is annihilator flow for $f$.

  Now let us sum the right-hand side of system \eqref{eq:cond}. By
  Observation~\ref{obs:annih-3}, it is enough to look at vertices in $T_h$,
  and there are only two such vertices -- $a$ and $t$. Obviously $\lambda_t = 0$.
  Denote by $e_A$, $e_{AB}$ and $e_B$ edges adjacent to $a$ and lying in only $A$, both
  $A$ and $B$ and only $B$. Then $\ip{f(\rho_{a}(e_{AB}))}{h(e_{AB})} = \lambda_a = 1$
  because $h(e_{AB})$ has non-zero exactly the last two coordinates and both $e_A$ and $e_B$
  have exactly one of the last two coordinates non-zero.
  So $\sum_{v \in T_h} \lambda_v = 1$ and the system is not solvable.
\end{proof}

Using well-known results about existence of nowhere-zero flow --
Theorems~\ref{thm:4NZF} and~\ref{thm:8NZF} -- we obtain
the desired results (note that 3-edge-colorable graph must be bridgeless).

\begin{corollary}
  Let $G$ be 3-edge-colorable non-planar cubic graph.
  Then there exists NZ $\Z_2^4$-flow for which the system \eqref{eq:cond} is not
  solvable.
\end{corollary}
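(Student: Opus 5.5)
The corollary follows by combining Theorem~\ref{thm:4NZF} with Observation~\ref{obs:plus2nonsolv}, taking $k=2$ there.

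\textbf{Step 1: a base flow.} Since $G$ is 3-edge-colorable and cubic, Theorem~\ref{thm:4NZF} gives a NZ $\Z_2^2$-flow $f'\colon E\to\Z_2^2$. Concretely, identify the three colors with the three nonzero elements of $\Z_2^2$. Their sum is zero, so the flow condition holds at every vertex.

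\textbf{Step 2: hypotheses of Observation~\ref{obs:plus2nonsolv}.} That observation only needs $G$ to be non-planar and cubic, together with a NZ flow $f'$. Non-planarity is assumed, and $f'$ comes from Step~1. A 3-edge-colorable graph is automatically bridgeless, since a NZ flow cannot exist across a bridge. So nothing further about connectivity is needed, although the surrounding planarity characterization assumes 2-edge-connectivity anyway.

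\textbf{Step 3: the extended flow.} Applying Observation~\ref{obs:plus2nonsolv} with $k=2$ gives a NZ flow $f\colon E\to\Z_2^{4}$ for which system~\eqref{eq:cond} is not solvable, which is the claim.

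The proof is direct; the only point to check is that the construction inside Observation~\ref{obs:plus2nonsolv} really yields a \emph{nowhere-zero} flow. It does, because the first $k$ coordinates of $f$ copy $f'$, which is already nowhere-zero. The last two coordinates are $\gamma(A)$ and $\gamma(B)$, which are flows because $A$ and $B$ are cycles. The non-solvability itself is certified inside that observation by the annihilator flow $h$: exactly the two vertices $a,t$ lie in $T_h$, with $\lambda_t=0$ and $\lambda_a=1$, so by Observation~\ref{obs:annih-3} the right-hand side of the combination is $1$.
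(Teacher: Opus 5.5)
Your proposal is correct and is exactly the paper's argument: Theorem~\ref{thm:4NZF} gives a NZ $\Z_2^2$-flow, and Observation~\ref{obs:plus2nonsolv} with $k=2$ extends it to a NZ $\Z_2^4$-flow for which system~\eqref{eq:cond} is unsolvable. Like the paper, you note that 3-edge-colorability implies bridgelessness, and your added checks (nowhere-zeroness of the extended flow, and $T_h=\{a,t\}$ with $\lambda_a=1$, $\lambda_t=0$) agree with the proof of that observation.
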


\begin{corollary}\label{cor:nonplan5}
  Let $G$ be bridgeless non-planar cubic graph.
  Then there exists NZ $\Z_2^5$-flow for which the system \eqref{eq:cond} is not
  solvable.
\end{corollary}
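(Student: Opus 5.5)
The corollary follows by combining Observation~\ref{obs:plus2nonsolv} with Theorem~\ref{thm:8NZF}. Since $G$ is bridgeless and cubic, Theorem~\ref{thm:8NZF} provides a NZ $\Z_2^3$-flow $f'$ on $G$. We then apply Observation~\ref{obs:plus2nonsolv} with $k=3$. That observation only needs $G$ to be non-planar and cubic together with some NZ flow $f'$. It returns a NZ flow $f:E\to\Z_2^{5}$ for which system~\eqref{eq:cond} has no solution, which is exactly the statement.

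The only point to check is that Observation~\ref{obs:plus2nonsolv} really applies in this setting. Its proof uses Kuratowski's theorem to find a subdivided $K_{3,3}$; non-planarity is part of our hypothesis, so this step is fine.

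The new flow $f$ is nowhere-zero because its first three coordinates agree with $f'$, which is already nowhere-zero. The two extra coordinates $\gamma(A)$ and $\gamma(B)$ are flows since $A$ and $B$ are circuits. Hence $f$ is a flow, and no further use of the flow property of $f'$ is required.

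I expect no real obstacle. The only care needed is to use the general Theorem~\ref{thm:8NZF} rather than Theorem~\ref{thm:4NZF}, because $G$ need not be 3-edge-colorable. This choice forces three base coordinates, which is why the resulting flow lives in $\Z_2^5$ and not $\Z_2^4$. Together with Observation~\ref{obs:plan}, this also completes the $\Z_2^5$ part of Theorem~\ref{thm:planchar}.
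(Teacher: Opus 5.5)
Your proposal is correct and matches the paper's argument. The paper likewise takes a NZ $\Z_2^3$-flow from Theorem~\ref{thm:8NZF}, using bridgelessness, and feeds it into Observation~\ref{obs:plus2nonsolv} with $k=3$ to obtain a NZ $\Z_2^5$-flow for which system~\eqref{eq:cond} is not solvable.
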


\begin{proof}[Proof of Theorem~\ref{thm:planchar}]
  If $G$ is planar, Observation~\ref{obs:plan} shows that system~\eqref{eq:cond} 
  is solvable for every nowhere-zero $\Z_2^k$-flow and every $k$. Conversely, 
  if $G$ is nonplanar, Corollary~\ref{cor:nonplan5} gives a nowhere-zero $\Z_2^5$-flow for which the system is not solvable.
  If $G$ is 3-edge-colorable, Observation~\ref{obs:plus2nonsolv} gives such a flow already in $\Z_2^4$.
\end{proof}

\subsection{The 5-CyDC Conjecture}
\label{sec:5CDC} 

The purpose of this section is to express Conjecture~\ref{conj:5CDC}
within our flow framework. We first collect a system of equations whose
solvability is equivalent to the existence of a 5-CyDC. We then eliminate
the auxiliary variables from this system and obtain an equivalent condition
on the NZ $\Z_2^3$-flow alone.

Our construction has $2^k$ labeled cycle slots. We therefore view a
5-CyDC as an 8-CyDC with three empty cycles. Since the cycles are labeled,
we may relabel any such cover so that its nonempty cycles have labels in
$\calI=\{000,001,010,011,100\}$. 
Conversely, discarding the three empty cycles of an 8-CyDC supported
on~$\calI$ gives a 5-CyDC.

Fix a NZ $\Z_2^3$-flow $f$ and put
\[
  F=\{e\in E:\ip{f(e)}{100}=1\},\qquad
  M=f^{-1}(100),\qquad
  Z=\{v\in V:E(v)\cap M\ne\emptyset\}.
\]
The set $F$ is a cycle, since its characteristic vector is the first
coordinate of~$f$. Thus, for every $v\in V(F)$, there is a unique edge
$e_v^*\in E(v)\setminus F$. Moreover, $M$ is a matching: if two edges
at a vertex had value $100$, the flow equation would force the third
edge to have value~$000$.

For a flow $f$, let $\mathsf{Five}(f)$ denote the following system
in variables $t_v\in\Z_2^3$ for $v\in V$ and
$\eps_e\in\Z_2$ for $e\in E$:
\begin{align}
  \left.
  \begin{aligned}
    t_u+t_v+\eps_{uv}f(uv)
      &=f(\rho_{u}(uv))+f(\rho_{v}(uv))
        &&\quad \forall uv\in E,\\
    t_v&=100+f(e_v^*)
        &&\quad \forall v\in V(F),\\
    \ip{t_v}{100}&=0
        &&\quad \forall v\in V\setminus V(F).
  \end{aligned}
  \qquad\right\} \label{eq:five-system}
\end{align}
The first line is precisely the alignment system~\eqref{eq:cond};
the other two lines exclude the labels $101$, $110$, and $111$ as we show now. 

\begin{observation}[Equational formulation]\label{obs:5cydc-equations}
  For a NZ $\Z_2^3$-flow $f$, the solutions of
  $\mathsf{Five}(f)$ correspond bijectively under~$\mu_3$ to the
  8-CyDCs that induce~$f$ and are supported on~$\calI$.
  Consequently, $G$ has a 5-CyDC if and only if
  $\mathsf{Five}(f)$ is solvable for some NZ $\Z_2^3$-flow~$f$.
\end{observation}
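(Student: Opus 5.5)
Since $\mu_3$ is a bijection from $\calN_3$ onto $\CC_8$ (Observation~\ref{obs:cydc-to-sol}), and the first line of $\mathsf{Five}(f)$ is exactly system~\eqref{eq:cond}, it suffices to show the following. For $(f,s)\in\calN_3$, the cover $\mu_3(f,s)$ has its cycles labeled $101,110,111$ empty if and only if $s$ satisfies the last two lines of~\eqref{eq:five-system}. The labels outside $\calI$ are exactly those with first coordinate $1$ other than $100$. So the task is to read off, vertex by vertex, which labels occur around $v$, and show these constraints are equivalent to the extra equations at $v$.

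Around $v$ the three cycles carry labels $\ell(e)=t_v+f(e)$ for $e\in E(v)$, as in the proof of Observation~\ref{obs:cydc-to-sol}. Taking first coordinates, $\ip{\ell(e)}{100}=\ip{t_v}{100}+\ip{f(e)}{100}$. Every edge lies in two cycles, each of which passes through both endpoints, so a cycle is nonempty iff it appears among the three labels at some vertex. Hence the cover is supported on $\calI$ iff at every $v$ each label $\ell(e)$ with first coordinate $1$ equals $100$. Now split into two cases.

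\emph{Case $v\notin V(F)$.} All $f(e)$, $e\in E(v)$, have first coordinate $0$, so every label at $v$ has first coordinate $\ip{t_v}{100}$. If this is $0$ there is no constraint. If it is $1$, all three labels must equal $100$, which is impossible since they are pairwise distinct (they differ by $f(e)\neq 0$). So the condition is exactly $\ip{t_v}{100}=0$.

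\emph{Case $v\in V(F)$.} Exactly two edges of $E(v)$ lie in $F$, namely those other than $e_v^*$. Their $f$-values have first coordinate $1$, while $f(e_v^*)$ has first coordinate $0$. Thus, whatever $t_v$ is, some label has first coordinate $1$. If $\ip{t_v}{100}=0$, the only such label is $\ell(e_v^*)=t_v+f(e_v^*)$, and the requirement $\ell(e_v^*)=100$ is exactly the second line. If $\ip{t_v}{100}=1$, the two labels $\ell(e)$ for $e\in F\cap E(v)$ have first coordinate $1$ and are distinct, so they cannot both equal $100$; this case is excluded. It is also inconsistent with the second line, since $100+f(e_v^*)$ has first coordinate $1+0=1$, wait: that gives $\ip{t_v}{100}=1$. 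So I must recheck which labels the second line actually pins down.

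This recheck is the main obstacle, and it is an indexing issue. With $t_v=100+f(e_v^*)$, the label $\ell(e_v^*)=t_v+f(e_v^*)$ equals $100$, and the other two labels $t_v+f(e)$ for $e\in F$ have first coordinate $1+1=0$. So the correct case split is by $\ip{t_v}{100}$. If it is $1$, the labels for $e\in F$ have first coordinate $0$, and only $\ell(e_v^*)$ has first coordinate $1$, so the requirement is $\ell(e_v^*)=100$, i.e.\ $t_v=100+f(e_v^*)$. If it is $0$, both labels for $e\in F$ have first coordinate $1$, are distinct, and so are not both $100$; this is excluded. Hence at $v\in V(F)$ the condition is exactly the second line. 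I will also double-check the convention for $\ell$ (the label of the cycle at $v$ not containing $e$), since in the proof of Observation~\ref{obs:cydc-to-sol} it is $\ell(e)=t_v+f(e)$.

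Combining the two cases gives the bijection between solutions of $\mathsf{Five}(f)$ and 8-CyDCs inducing $f$ supported on $\calI$. For the consequence, a 5-CyDC, padded with three empty cycles and relabeled into $\calI$, is an 8-CyDC; by Observation~\ref{obs:cydc-to-sol} it equals $\mu_3(f,s)$ for some NZ flow $f$, and $s$ then solves $\mathsf{Five}(f)$. Conversely, a solution of $\mathsf{Five}(f)$ gives a cover supported on $\calI$, and discarding the empty cycles gives a 5-CyDC.
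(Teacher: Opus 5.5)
Your argument is correct and is essentially the paper's proof: both read off the three labels $t_v+f(e)$, $e\in E(v)$, at each vertex, split on whether $v\in V(F)$ and on the first coordinate of $t_v$, and use distinctness of these labels to exclude the bad case. Both then handle the consequence by padding with, or discarding, empty cycles and invoking Observation~\ref{obs:cydc-to-sol}. In a final write-up, delete your first pass through the case $v\in V(F)$, which has the parity of $\ip{t_v}{100}$ backwards. Your corrected version is the right one: $\ell(e_v^*)$ is the only label with first coordinate $1$ exactly when $\ip{t_v}{100}=1$, and otherwise the two $F$-labels are both bad.
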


\begin{proof}
Suppose first that $(t,\eps)$ satisfies
$\mathsf{Five}(f)$. The three labels of cycles incident with a vertex
$v$ are
$
  \{t_v+f(e):e\in E(v)\}.
$
If $v\in V(F)$, then the second line of~\eqref{eq:five-system} gives
$t_v+f(e_v^*)=100$. The other two labels have first coordinate zero,
because they correspond to the two edges of~$F$ incident with~$v$.
If $v\notin V(F)$, then both $t_v$ and all three values $f(e)$,
$e\in E(v)$, have first coordinate zero. Hence every cycle label
belongs to~$\calI$.

Conversely, suppose that $(f,(t,\eps))\in\calN_3$ and that
$\mu_3(f,(t,\eps))$ is supported on~$\calI$. The three labels
at every vertex are distinct, since the three incident values of a
NZ flow on a cubic graph are distinct. Let $v\in V(F)$. If $t_v$ had
first coordinate zero, then the two labels $t_v+f(e)$ for
$e\in E(v)\cap F$ would be distinct and would both have first
coordinate one. This is impossible because $100$ is the only such
label in~$\calI$. Thus $t_v$ has first coordinate one, and so does
$t_v+f(e_v^*)$; it must therefore equal~$100$. If
$v\notin V(F)$ and $t_v$ had first coordinate one, all three distinct
labels at~$v$ would have first coordinate one, which is again
impossible. This proves the last two lines of~\eqref{eq:five-system}.

It remains only to pass between five and eight labels. Given a
5-CyDC, append three empty cycles and label the eight slots so that
the five original cycles have labels in~$\calI$. By
Observation~\ref{obs:cydc-to-sol}, this labeled 8-CyDC has a unique
preimage $(f,(t,\eps))$, and the preceding paragraph shows
that it solves $\mathsf{Five}(f)$. The converse follows by discarding
the three cycles with labels outside~$\calI$.
\end{proof}

For a fixed flow, system~\eqref{eq:five-system} has an especially
simple solvability criterion.

\begin{theorem}\label{thm:5cydc}
  Let $f$ be a NZ $\Z_2^3$-flow, and define $F$, $M$, and $Z$ as above.
  The following are equivalent:
  \begin{enumerate}
    \item $\mathsf{Five}(f)$ has a solution;
    \item every component of $(V,E\setminus F)$ contains an even
      number of vertices from~$Z$;
    \item there is an edge set $P\subseteq E\setminus F$ whose
      odd-degree vertices are precisely the vertices in~$Z$.
  \end{enumerate}
  If these conditions hold and $(V,E\setminus F)$ has $c$ components,
  then the number of solutions of $\mathsf{Five}(f)$ is
  $2^{\,|E\setminus F|-|V|+c}$.
\end{theorem}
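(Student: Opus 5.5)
The plan is to split system~\eqref{eq:five-system} along the cycle $F$: the part living on $F$ will turn out to be forced and consistent, and what is left is a Tutte-type $\Z_2^2$ system on $(V,E\setminus F)$. That residual system is then handled with the annihilator technique of Observations~\ref{obs:annih} and~\ref{obs:annih-3}, and the condition in part~2 should come out of it as a parity count.

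\textbf{Step 1: the edges of $F$ and the shape of $E\setminus F$.}
For $v\in V(F)$ the value $t_v$ is fixed by the second line of~\eqref{eq:five-system}. The labels at $v$ are then $100$ on the side opposite $e_v^*$, and $100+f(e)$ on the side opposite $e\in E(v)\cap F$.
\begin{itemize}
\item Take $uv\in F$. On both sides, at both ends, the label pair is $\{100,\,100+f(uv)\}$. Hence the equation for $uv$ holds for exactly one value of $\eps_{uv}$; I would verify this directly from~\eqref{eq:labels}. So the $F$-part contributes no constraint and no freedom.
\item For $v\notin V(F)$ all three incident edges lie outside $F$. For $v\in V(F)$ exactly one incident edge, $e_v^*$, lies outside $F$.
\item Every edge of $M$ has value $100$, so its two neighbours at each end lie in $F$. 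Thus $Z\subseteq V(F)$, and each $M$-edge forms a whole component of $(V,E\setminus F)$ consisting of two vertices of $Z$.
\item Every other component consists of a core of vertices outside $V(F)$, with pendant leaves from $V(F)\setminus Z$.
\end{itemize}
Condition~2 thus only restricts the non-$M$ components, and $M$-components are automatically even. The equivalence $2\Leftrightarrow3$ is the standard $T$-join fact, applied componentwise in $(V,E\setminus F)$.

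\textbf{Step 2: reduce to $\Z_2^2$.}
For $e\notin F$ the first coordinate of $f(e)$ is zero. The first coordinate of each remaining equation involves only first coordinates of the $t_v$, and these are fixed: $1$ on $V(F)$ and $0$ elsewhere. I would check, by the same label-side argument as in Step~1, that these first-coordinate equations hold identically. What remains is a system over $\Z_2^2$ in the last two coordinates $t'_v$, with free $t'_v$ for $v\notin V(F)$ and fixed $t'_v$ for $v\in V(F)$:
\[
  t'_u+t'_v+\eps_e f'(e)=r'_e \qquad (e\in E\setminus F).
\]
Here $f'$ is a nowhere-zero $\Z_2^2$-flow on each core, including the pendant edges, and $f'=0$ on $M$.

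Solvability is tested, as in Observation~\ref{obs:annih}, by coefficient vectors $h:E\setminus F\to\Z_2^2$ with $\ip{h(e)}{f'(e)}=0$ that satisfy the flow condition only at free vertices. The propagation argument of Observation~\ref{obs:annih-k2} gives, on a non-$M$ component $K$, that $h|_K\in\{0,\alpha(f')\}$; pendant leaves impose no flow condition, so they do not stop the propagation. On an $M$-edge, $h$ is arbitrary, but the equation there reads $t'_u+t'_v=r'_e$ with both sides fixed, and I would verify it holds directly.

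\textbf{Step 3 (main obstacle): the parity.}
For $h=\alpha(f')$ on $K$, the obstruction equals $\sum_{e\in K}\ip{r'_e}{h(e)}$ plus the contributions of the fixed $t'_v$ at the leaves. As in Observation~\ref{obs:annih-3}, interior vertices of $K$ have all three $h$-values nonzero, so they contribute $\lambda_v=1$ each. Each leaf contributes a term depending on whether it lies in $Z$, i.e.\ on whether $f(e_v^*)$ has nonzero last two coordinates.

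The goal is to show that the total is congruent mod $2$ to $|K\cap Z|$. The hand-shaking count, as in Observation~\ref{obs:align}, handles the interior vertices together with the leaves in $V(F)\setminus Z$. The delicate computation is the leaf term, using the fixed $t_v=100+f(e_v^*)$: I expect it to be $0$ for ordinary leaves, and the $Z$-leaves to shift the parity by one each. I expect this to be the main obstacle, and I would start by computing a single leaf explicitly.

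\textbf{Step 4: counting.}
Once the system is solvable, I would count as in Corollary~\ref{cor:num-sols}: number of free variables, minus number of equations, plus the dimension of the annihilator space. The free variables are two per vertex outside $V(F)$ and $\eps_e$ for $e\notin F$. The equations are two per edge of $E\setminus F$. The annihilator space has one dimension per non-$M$ component and two per $M$-component. Using $3|V\setminus V(F)|+|V(F)|=2|E\setminus F|$, this should simplify to $|E\setminus F|-|V|+c$; the last bookkeeping is to confirm the per-component constants add up.
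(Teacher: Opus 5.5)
Your route is the paper's: fix $t_v$ on $V(F)$, let each $F$-edge force $\eps_e$, observe that the first coordinates are automatic, and test the remaining $\Z_2^2$ system on $(V,E\setminus F)$ against annihilators $h\in\{0,\alpha(f')\}$ per component. The proof then finishes with a handshake parity and a dimension count. Your first bullet of Step~1 and your Step~2 are correct.

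\textbf{The gap: where $M$ and $Z$ sit.} The value $100$ has first coordinate $1$, so $M\subseteq F$. At an end $v$ of an $M$-edge, the other two values add up to $100$. Hence exactly one of them lies in $F$, and the other is $e_v^*$, not both in $F$ as you claim. Consequences:
\begin{itemize}
\item $(V,E\setminus F)$ contains no $M$-edges at all.
\item Every $Z$-vertex is a leaf of some component, attached by $e_v^*$.
\item Since $e_v^*\notin F$ and $f$ is nowhere-zero, $f'(e_v^*)\neq 0$ for every leaf. So your Step~3 criterion for $v\in Z$ holds for every leaf.
\item One-edge components $uv$ with $u,v\in V(F)$ do exist, but $f'(uv)\neq 0$. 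Their annihilator space is one-dimensional, and the equation $\eps_{uv}f'(uv)=f'(\rho_u(uv))+f'(\rho_v(uv))$ is solvable iff $|\{u,v\}\cap Z|$ is even. They are a genuine constraint, not automatically fine.
\end{itemize}
In your Step~1 picture, condition~2 would be vacuous, so every NZ $\Z_2^3$-flow would give a 5-CyDC and Conjecture~\ref{conj:5CDC} would follow outright. That is a warning sign. Your count of ``two dimensions per $M$-component'' would also overshoot $|E\setminus F|-|V|+c$ by the number of such components.

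\textbf{The leaf computation you defer is the heart of the proof.} Take a leaf $v$ with $e=e_v^*$ and $h(e)=\alpha(f'(e))$.
\begin{itemize}
\item The fixed value $t'_v=f'(e)$ contributes $\ip{f'(e)}{\alpha(f'(e))}=0$, so the fixed $t_v$ plays no role.
\item The leaf therefore contributes only $\ip{f'(\rho_v(e))}{\alpha(f'(e))}$. This is $0$ iff $f'(\rho_v(e))\in\{00,f'(e)\}$.
\item The two $F$-edges at $v$ have $f'$-values $f'(\rho_v(e))$ and $f'(\rho_v(e))+f'(e)$. So the condition holds iff one of them has value $100$, i.e.\ iff $v\in Z$.
\end{itemize}
Thus $Z$-leaves contribute $0$, and every other vertex of $K$ contributes $1$. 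The total is $|V(K)|-|Z\cap V(K)|$, and $|V(K)|$ is even because every vertex of $K$ has degree $3$ or $1$.

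Your stated expectation, ``$0$ for ordinary leaves'', would instead give $|U\cap V(K)|+|Z\cap V(K)|$ with $U=V\setminus V(F)$. This is wrong, for example, for one vertex of $U$ with three non-$Z$ leaves.

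With one annihilator dimension per component, the count is $2|U|+|E\setminus F|-(2|E\setminus F|-c)=|E\setminus F|-|V|+c$, using $2|E\setminus F|=3|U|+|V(F)|$.
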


We prove the theorem by reducing system~\eqref{eq:five-system} and
then describing all linear dependencies among the reduced equations.
Put
\[
  U=V\setminus V(F),\qquad W=V(F),\qquad E^*=E\setminus F,
\]
and let $q:E\to\Z_2^2$ be the projection of~$f$ onto its last two
coordinates. For $v\in U$, let $t_v^*$ denote the last two
coordinates of~$t_v$. If $v\in W$, the second line
of~\eqref{eq:five-system} already fixes these coordinates as
$q(e_v^*)$.

First consider an edge $e=uv\in F$. Substituting the forced values
of $t_u$ and $t_v$ into its alignment equation gives
\[
  \eps_e f(e)
  =f(\rho_{u}(e))+f(e_u^*)+f(\rho_{v}(e))+f(e_v^*).
\]
For $w\in\{u,v\}$, the value
$f(\rho_{w}(e))+f(e_w^*)$ is either $0$ or $f(e)$. Hence the right-hand
side is either $0$ or $f(e)$, and the equation determines
$\eps_e$ uniquely.

It remains to consider $e=uv\in E^*$. The first coordinate of its
alignment equation is automatically satisfied. After interchanging
$u$ and $v$ when needed, the last two coordinates give the following
\emph{reduced system}:
\begin{align}
  \left.
  \begin{aligned}
    t_u^*+t_v^*+\eps_e q(e)
      &=q(\rho_{u}(e))+q(\rho_{v}(e))
        && e = uv \in E^*, u,v\in U,\\
    t_u^*+\eps_e q(e)
      &=q(\rho_{u}(e))+q(\rho_{v}(e))+q(e)
        && e = uv \in E^*, u\in U,\ v\in W,\\
    \eps_e q(e)
      &=q(\rho_{u}(e))+q(\rho_{v}(e))
        && e = uv \in E^*, u,v\in W.
  \end{aligned}
  \qquad\right\} \label{eq:reduced-five}
\end{align}
Solutions of~\eqref{eq:reduced-five} correspond bijectively to solutions of $\mathsf{Five}(f)$.

A linear combination of the reduced equations is specified by coefficients
$h:E^*\to\Z_2^2$, where the equation for~$e$ is multiplied by~$h(e)$.
Its left-hand side vanishes identically if and only if
\begin{align}
  \left.
  \begin{aligned}
    \sum_{e\in E(v)}h(e)&=0
      &&\quad \forall v\in U,\\
    \ip{h(e)}{q(e)}&=0
      &&\quad \forall e\in E^*.
  \end{aligned}
  \qquad\right\} \label{eq:reduced-annihilator}
\end{align}
We call such an~$h$ a \emph{reduced annihilator} (flow condition is not imposed).  

\begin{observation}\label{obs:reduced-annihilators}
  Let $K$ be a component of $(V,E^*)$, and recall that
  $\alpha(x_1,x_2)=(x_2,x_1)$. On $E(K)$, every reduced annihilator
  is either identically zero or satisfies
  \[
    h(e)=\alpha(q(e))\qquad\forall e\in E(K).
  \]
  Consequently, the space of reduced annihilators has dimension
  $c$, where $c$ is the number of components of $(V,E^*)$.
\end{observation}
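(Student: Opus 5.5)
Our plan follows the proof of Observation~\ref{obs:annih-k2}, working one component $K$ of $(V,E^*)$ at a time; the only change is that vertices of $W$ impose no flow condition. First we record the local structure. Every edge $e\in E^*$ has first coordinate of $f(e)$ equal to zero. Since $f(e)\neq 0$, this gives $q(e)\neq 0$. Hence the condition $\ip{h(e)}{q(e)}=0$ in $\mathbb Z_2^2$ forces $h(e)\in\{0,\alpha(q(e))\}$, because $\alpha(q(e))$ is the unique nonzero vector orthogonal to $q(e)$. So on each edge of $E^*$ a reduced annihilator is described by one bit $b_e\in\Z_2$, with $h(e)=b_e\,\alpha(q(e))$.

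Next we show the bit is constant along $K$, by analysing the vertices of $K$.
\begin{itemize}
\item \emph{Vertices $v\in U$.} All three edges of $E(v)$ lie in $E^*$, because $v\notin V(F)$. At $v$ the flow condition restricted to the last two coordinates reads $\sum_{e\in E(v)} q(e)=0$, and the three values $q(e)$ are distinct and nonzero (they are distinct because the three values of $f$ at $v$ are distinct and all have first coordinate $0$). Since $\alpha$ is linear and injective, the three vectors $\alpha(q(e))$ are likewise distinct and nonzero, and they sum to $0$. The condition $\sum_{e\in E(v)} b_e\,\alpha(q(e))=0$ therefore holds exactly when all $b_e$ are equal: no single vector and no sum of two distinct nonzero vectors in $\Z_2^2$ is zero, while the sum of all three is.
\item \emph{Vertices $v\in W$.} Such a vertex has exactly one edge of $E^*$, namely $e_v^*$, so it is a leaf of $K$ (or isolated) and imposes no constraint.
\end{itemize}
Now walk along a path in $K$ between two edges. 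Every internal vertex of such a path has degree at least $2$ in $K$, so it lies in $U$, and by the first case the bits agree across it. Hence $b$ is constant on $E(K)$. Conversely, both constant choices satisfy \eqref{eq:reduced-annihilator}: $b\equiv 0$ trivially, and $b\equiv 1$ by the computation at $U$-vertices together with $\ip{\alpha(x)}{x}=0$. This proves the dichotomy on each $E(K)$.

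For the dimension count we observe that the conditions \eqref{eq:reduced-annihilator} decouple over components: each vertex of $U$ has all its edges inside a single component, and each edge condition involves only that edge. So the space of reduced annihilators is the direct product over components of $\{0,\alpha(q)|_{E(K)}\}$, and its dimension is the number of components that contain at least one edge.

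The main obstacle is reconciling this with the claimed value $c$, since a component could in principle be an isolated vertex contributing nothing. We will check that this cannot happen. A vertex $v\in U$ has degree $3$ in $(V,E^*)$, and a vertex $v\in W$ has the edge $e_v^*$. So every vertex has degree at least $1$ in $(V,E^*)$, every component contains an edge, and the dimension is exactly $c$.
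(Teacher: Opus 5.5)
Your proof is correct and follows the paper's argument. You pin $h(e)\in\{0,\alpha(q(e))\}$ on each edge, force an all-or-nothing choice at every $U$-vertex, propagate it through each component (the $W$-vertices being leaves), and count one free choice per component; your explicit check that every vertex has degree at least one in $(V,E^*)$, so no component is edgeless, spells out a step the paper leaves implicit.
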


\begin{proof}
For every $e\in E^*$, the value $q(e)$ is nonzero. The second
condition in~\eqref{eq:reduced-annihilator} therefore implies
\begin{equation}\label{eq:he}
  h(e)\in\{0,\alpha(q(e))\}.
\end{equation}
At a vertex $v\in U$, the three incident values of~$q$ are the three
nonzero elements of~$\Z_2^2$. Hence the first condition
in~\eqref{eq:reduced-annihilator} holds precisely when either all
three incident values of~$h$ are zero or all three equal the
corresponding values of~$\alpha\circ q$. This choice propagates
through each component. If a component contains no vertex of~$U$,
then it consists of a single edge with both endpoints in~$W$, and
the conclusion follows from~\eqref{eq:he}. The choices on distinct
components are independent.
\end{proof}

\begin{proof}[Proof of Theorem~\ref{thm:5cydc}]
The proof is a variant of the proof of~Observation~\ref{obs:annih-3}. 
By Observation~\ref{obs:reduced-annihilators}, the reduced system is
solvable if and only if its right-hand side sums to zero against the
reduced annihilator supported on each component~$K$. For this
annihilator, the sum is
\begin{align*}
  \sum_{v\in U\cap V(K)}
    \sum_{e\in E(v)}
      \ip{q(\rho_{v}(e))}{\alpha(q(e))}
  +
    \sum_{v\in W\cap V(K)}
      \ip{q(\rho_{v}(e_v^*))+q(e_v^*)}
         {\alpha(q(e_v^*))}.
\end{align*}
At a vertex $v\in U$, the values $q(\rho_{v}(e))$ and $q(e)$ are
distinct nonzero elements of~$\Z_2^2$, so every inner product in the
first sum equals~$1$. There are three of them, and hence $v$
contributes~$1$.

Now let $v\in W$. The two edges of~$F$ incident with~$v$ have last
two coordinates $q(\rho_{v}(e_v^*))$ and
$q(\rho_{v}(e_v^*))+q(e_v^*)$. One of these edges has value $100$ if and
only if
$
  q(\rho_{v}(e_v^*))\in\{00,q(e_v^*)\}
$.
Equivalently, the contribution of~$v$ to the second sum is zero if
$v\in Z$ and one otherwise. The entire sum for~$K$ is therefore,
modulo two,
\[
  |U\cap V(K)|+|(W\setminus Z)\cap V(K)|
  =|V(K)|-|Z\cap V(K)|.
\]
Every vertex has odd degree in~$K$: vertices in~$U$ have degree
three and vertices in~$W$ have degree one. The handshaking lemma
thus shows that $|V(K)|$ is even. Consequently, the right-hand side
vanishes for~$K$ if and only if $|Z\cap V(K)|$ is even. This proves
the equivalence of (1) and~(2).

The equivalence of (2) and (3) is the standard criterion for a
$Z$-join. Necessity follows from the handshaking lemma in each
component of $(V,E^*)$. For sufficiency, pair the vertices of~$Z$
within each component and take the symmetric difference of paths
joining the paired vertices.

Finally, the reduced system has $2|U|+|E^*|$ variables in~$\Z_2$ and
$2|E^*|$ equations over~$\Z_2$. Its space of dependencies has dimension
$c$, so, whenever the system is solvable, its solution space has
dimension
\[
  2|U|+|E^*|-(2|E^*|-c)
  =2|U|-|E^*|+c
  =|E^*|-|V|+c.
\]
The last equality follows from
$2|E^*|=3|U|+|W|$. This proves the claimed number of solutions.
\end{proof}

Hoffmann-Ostenhof characterized when a prescribed cycle is contained
in a 5-CyDC. Here ``contains'' means that the prescribed cycle is a
subgraph of one member of the cover; it need not itself be a member.

\begin{theorem}[Hoffmann-Ostenhof~\cite{hoffmann}]\label{thm:hoffmann}
  Let $G$ be a cubic graph and let $C_0$ be a cycle of~$G$.
  Then $G$ has a 5-CyDC containing $C_0$ if and only if there are a
  matching $M$ and cycles $C_1,C_2$ such that
  \begin{enumerate}
    \item $G-M$ has a NZ $\Z_2^2$-flow, and
    \item $C_1\cap C_2=M$ and $C_0\subseteq C_1$.
  \end{enumerate}
\end{theorem}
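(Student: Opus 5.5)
The plan is to derive Theorem~\ref{thm:hoffmann} from the flow formulation, namely Observation~\ref{obs:5cydc-equations} together with Theorem~\ref{thm:5cydc}. Throughout I use one fact about a labeled 8-CyDC supported on $\calI$ that comes from a NZ $\Z_2^3$-flow $f$. Since $100$ is the only label in $\calI$ with first coordinate $1$, and the two labels on $e$ differ by $f(e)$ (Observation~\ref{obs:keyemb}), we have $e\in F$ if and only if exactly one of the two cycles through $e$ is $C_{100}$. Hence $F=C_{100}$. By~\eqref{eq:fsum}, $f(e)=100$ exactly when $e\in C_{100}\cap C_{000}$, so $M=C_{100}\cap C_{000}$.

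\emph{Forward direction.} Take a 5-CyDC one of whose members contains $C_0$. Append three empty cycles and label so that the five original cycles carry labels in $\calI$, with the member containing $C_0$ labeled $100$. This is allowed because any bijection of the five cycles onto $\calI$ works. Let $(f,(t,\eps))=\mu_3^{-1}$ of this cover, by Observation~\ref{obs:cydc-to-sol}. Put $C_1=C_{100}=F\supseteq C_0$, $C_2=C_{000}$ and $M=f^{-1}(100)$. Then $M$ is a matching, as noted before~\eqref{eq:five-system}, and $C_1\cap C_2=M$ by the fact above. Let $q$ be the projection of $f$ to its last two coordinates. It is a $\Z_2^2$-flow on $G$ that vanishes exactly where $f\in\{000,100\}$, i.e.\ exactly on $M$, since $f$ is nowhere-zero. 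So $q$ restricted to $G-M$ is a NZ $\Z_2^2$-flow on $G-M$; vertices of degree two cause no problem.

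\emph{Converse direction.} Given $M$, $C_1$, $C_2$ and a NZ $\Z_2^2$-flow $q$ on $G-M$, extend $q$ by $0$ on $M$; this is still a flow on $G$. Define $f:E\to\Z_2^3$ to have first coordinate $\gamma(C_1)$ and last two coordinates $q$. It is a flow. It is nowhere-zero, because off $M$ we have $q\ne 0$, and $M=C_1\cap C_2\subseteq C_1$ gives first coordinate $1$ on $M$. Moreover $f^{-1}(100)=M$, $F=C_1$ and $Z=V(M)$. To verify condition~3 of Theorem~\ref{thm:5cydc}, take $P=C_2\setminus M=C_2\setminus C_1\subseteq E\setminus F$. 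Since $\gamma(P)=\gamma(C_2)+\gamma(M)$, $C_2$ is even, and $M$ is a matching, the odd-degree vertices of $P$ are exactly $V(M)=Z$. Theorem~\ref{thm:5cydc} then makes $\mathsf{Five}(f)$ solvable, and Observation~\ref{obs:5cydc-equations} yields an 8-CyDC supported on $\calI$, i.e.\ a 5-CyDC. By the fact above, its member $C_{100}$ equals $F=C_1\supseteq C_0$.

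The main obstacle, though it is mild, is the bookkeeping identity $F=C_{100}$ and $M=C_{100}\cap C_{000}$. Everything hinges on it, together with choosing the labeling in the forward direction so that the member containing $C_0$ receives label $100$. Once this identity is in place, the equivalence reduces to the $Z$-join criterion, which Theorem~\ref{thm:5cydc} has already established.
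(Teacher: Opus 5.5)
Your proof is correct, but the paper does not prove this statement. It cites Hoffmann-Ostenhof and only remarks on one direction of the link: a $Z$-join $P$ from Theorem~\ref{thm:5cydc} supplies Hoffmann-Ostenhof's data via $C_1=F$, $C_2=M\cup P$, with $C_0=C_1=F$.

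You run that remark backwards. From $M$, $C_1$, $C_2$ and the flow $q$ on $G-M$ you build $f=(\gamma(C_1),q)$. You then check $F=C_1$, $f^{-1}(100)=M$ and $Z=V(M)$, and exhibit $P=C_2\setminus M$ as the $Z$-join. Theorem~\ref{thm:5cydc} and Observation~\ref{obs:5cydc-equations} then produce the cover. Your forward direction needs only the bijection $\mu_3$, after labeling the member containing $C_0$ by $100$, together with your identities $F=C_{100}$ and $M=C_{100}\cap C_{000}$; both identities are correct. There is no circularity, since the proof of Theorem~\ref{thm:5cydc} does not use Hoffmann-Ostenhof's result.

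What your route buys is that the theorem becomes a corollary of the paper's own linear-algebra framework, with a sharper conclusion. The 5-CyDC can be taken to be $\mu_3(f,s)$ for the explicit flow $f=(\gamma(C_1),q)$, with $C_1$ itself as the member labeled $100$. That is exactly the information the paper says the cited construction does not provide. The price is that your argument rests on the full annihilator computation behind Theorem~\ref{thm:5cydc}, whereas quoting the original result needs none of the $\mathsf{Five}(f)$ machinery.
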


Let us explain the relation of our result to Theorem~\ref{thm:hoffmann}. Given a
$Z$-join~$P$ in Theorem~\ref{thm:5cydc}, take $C_1=F$, $C_2=M\cup P$.
The odd-degree vertices of both $M$ and $P$ are exactly~$Z$, so
$C_2$ is a cycle; moreover, $C_1\cap C_2=M$. Since $f$ is
nowhere-zero, $q^{-1}(00)=M$, and hence $q$ restricts to a NZ
$\Z_2^2$-flow on~$G-M$. This fits Theorem~\ref{thm:hoffmann} with $C_0=C_1=F$.
His construction consequently keeps $F$ as a member of the resulting
5-CyDC. What it does not specify is whether the resulting 
CyDC is of form $\mu_3(f,s)$ for some $s$, which is what we prove in Theorem~\ref{thm:5cydc}. 

Combining Observation~\ref{obs:5cydc-equations} and
Theorem~\ref{thm:5cydc} gives the promised flow-only version of the
5-CyDC conjecture.

\begin{conjecture}[Equivalent flow formulation]\label{conj:5cydc-flow}
  Every bridgeless cubic graph has a NZ $\Z_2^3$-flow~$f$ such that,
  with
  \[
    F=\{e\in E:\ip{f(e)}{100}=1\}
    \quad\text{and}\quad
    Z=\{v\in V:\exists e\in E(v),\ f(e)=100\},
  \]
  every component of $(V,E\setminus F)$ contains an even number of
  vertices from~$Z$.
\end{conjecture}

Conjecture~\ref{conj:5cydc-flow} is equivalent to
Conjecture~\ref{conj:5CDC}, not merely sufficient for it. In one
direction, the parity condition and Theorem~\ref{thm:5cydc} produce a
solution of~\eqref{eq:five-system}, hence a 5-CyDC. In the other
direction, relabel any 5-CyDC by~$\calI$ and use the flow induced by
its labels; Theorem~\ref{thm:5cydc} then gives the parity condition.
Theorem~\ref{thm:8NZF} guarantees the existence of NZ
$\Z_2^3$-flows, but the remaining problem is to choose one with this
additional parity property.

\section*{Acknowledgments} 
We have used GPT 5.6 and Claude Fable for draft versions of the paper.

\end{document}